\def\R{\mathbb{R}}
\def\Q{\mathbb{Q}}
\def\p{\phi}
\def\po{\phi^\circ}
\def\Div{\textup{div}}
\def\dist{\textup{dist}}
\def\spt{\textup{Supp}\,}
\def\e{\varepsilon}
\renewcommand{\H}{\mathcal{H}}
\newcommand{\beq}{\begin{equation}}
\newcommand{\eeq}{\end{equation}}
\newcommand{\pa}{\partial}
\newcommand{\noop}[1]{}
\theoremstyle{plain}
\newtheorem{theorem}{Theorem}[section]
\newtheorem{proposition}[theorem]{Proposition}
\newtheorem{corollary}[theorem]{Corollary}
\newtheorem{lemma}[theorem]{Lemma}
\newtheorem{definition}[theorem]{Definition}
\theoremstyle{remark}
\newtheorem{remark}[theorem]{Remark}
\newcommand{\dd}{d}
\newcommand{\N}{\mathbb N}
\numberwithin{equation}{section}
 \title{Existence and uniqueness for a crystalline mean curvature flow}
\author{Antonin Chambolle \and Massimiliano Morini \and Marcello Ponsiglione}
\begin{document}

\maketitle

\begin{abstract}
\small{An existence and uniqueness result, up to  fattening,  for a class of crystalline mean curvature flows
with  natural mobility is proved. The results are valid in any dimension and for arbitrary, possibly unbounded, initial closed sets.
The comparison principle is obtained by means of a  suitable weak formulation of the flow, while the existence  of a global-in-time solution follows via a  minimizing movements approach.
 
\vskip .3truecm \noindent Keywords: Geometric evolution equations, Minimizing movements, Crystalline mean curvature motion.
\vskip.1truecm \noindent 2000 Mathematics Subject Classification: 
53C44, 	49M25, 35D40. 
}
\end{abstract}

\bibliographystyle{plain}

\section{Introduction}

In this paper we deal with the anisotropic mean curvature  motion; that is,  flows of sets $t\mapsto E(t)$ (formally) governed by the law 
\begin{equation}\label{oee} 
V(x,t) = -m(\nu^{E(t)})\kappa^{E(t)}_{\p}(x),
\end{equation}
where  $V(x,t)$ stands for the (outer) normal velocity of the boundary $\pa E(t)$ at $x$, $\p$ is a given norm on $\R^N$ representing the {\it surface tension}, $\kappa^{E(t)}_{\phi}$ is the {\em anisotropic mean curvature} of $\pa E(t)$ associated with the anisotropy $\p$, and $m$ is a positive {\em mobility} which depends on the outer unit normal $\nu^{E(t)}$ to $\pa E(t)$. 
Such an evolution law may be regarded as the gradient flow (with respect to a suitable formal Riemannian structure) of the anisotropic perimeter functional
\beq\label{pfi}
P_\p(E)=\int_{\pa E}\p(\nu^E)\,d\H^{N-1},
\eeq
 the anisotropic curvature $\kappa^{E}_{\p}$ of  $\pa E$ being nothing but the first variation of \eqref{pfi} at $E$.  When $\p$ is differentiable in $\R^{N}\setminus\{0\}$, then $\kappa^{E}_{\p}$ is given by
\begin{equation}\label{kappaphi}
\kappa^{E}_{\p}=\Div\left(\nabla \p(\nu^E)\right)\,.
\end{equation}
However, we are particularly interested in the case when $\phi$ is not
differentiable, for instance the {\em crystalline} case, when the unit ball $B_{\p}:=\{\p\leq 1\}$, known as the \textit{Frank diagram}, is a convex polytope.  In the latter case, we will only consider the natural mobility given by $m=\p$. With this choice,  \eqref{oee} has the interesting property that the flow starting from a  \textit{Wulff shape}, that is, a level
set of the polar $\po$ of $\p$, consists in a one-parameter family of shrinking Wulff shapes that extinguish in finite time. We recall that  Wulff shapes are the only solutions to the  isoperimetric problem associated with $P_\p$ (see \cite{FonsecaMuller}). 

The law \eqref{oee} is used to describe several phenomena in Materials Science and Crystal Growth, see for instance \cite{Taylor78, Gurtin93}.
From the mathematical point of view, the geometric motion is well defined
in a classical sense in the smooth case, that is, when $\p$ is 
at least $C^{3,\alpha}$ (as well as the initial surface, and
except at the origin) and ``elliptic'' (which means for instance
that $\p^2$ is strongly convex)~\cite{ATW}.
Of course, the classical mean curvature flow falls within this class and corresponds to the choice $\p=\text{Euclidean norm}$. In the smooth case, the main mathematical difficulties are related to the fact that singularities (like pinching) may form in finite time (see for instance \cite{Grayson89}) in dimensions $N\geq 3$.  Thus, 
the strong formulation of  \eqref{oee}, which requires smoothness of the evolving sets, is well defined only for short times and one needs a weaker notion of solution that can handle the presence of singularities in order to define the flow for all positive times. When $\p$ is smooth, this task has been already accomplished and different approaches have been proposed in the literature, starting from the pioneering work 
 by  Brakke~\cite{Br}, who suggested
 a weak  formulation  of the motion by mean curvature yielding deep  regularity results but  lacking  uniqueness. These uniqueness issues have been subsequently overcome via  the so-called {\em level set approach}~\cite{OS,EvansSpruckI,CGG}.
In particular, the case of~\eqref{oee} for $m,\p$ of class $C^2$ is covered
by~\cite{CGG}.
The main idea is to  represent the initial set as   the zero sublevel set of a function $u_0$ and then to let all these level sets evolve  according to the same geometric law (which makes sense thanks to the fact that the evolutions
which we consider preserve inclusion).
This procedure defines  a time-dependent function $u(x,t)$ and transforms the geometric equation into a (degenerate) parabolic equation for $u$,  which is shown to admit a  {\em unique viscosity solution} with the prescribed initial datum $u_0$. The evolution of the zero sublevel set of such a solution defines a {\em generalized motion} (see also~\cite{BaSoSou}), which exists for all times and agrees with the classical one for short times, before the appearance of singularities (see \cite{EvansSpruckII}). Such a motion satisfies a comparison principle and is unique whenever the level sets of $u$ have zero Lebesgue measure, i.e., whenever the so-called fattening phenomenon does not occurs. Fattening may in fact appear even for a smooth initial datum $E^0$ (see \cite{AngIlCh}), but its occurrence  is in some sense very ``rare'':  for instance, it is easy to understand that  almost all the sublevels sets of the signed distance function from any given set $E^0$ will not generate any fattening. 
 
 A third approach is represented by  the minimizing movements scheme devised by
Almgren, Taylor and Wang~\cite{ATW} and Luckhaus and Sturzenhecker~\cite{LS}. 
It consists in constructing a sequence of discrete-in-time evolutions by iteratively solving  suitable incremental minimum problems.
 Any limit  of these  evolutions as the time step vanishes defines a   motion, which exists for all positive times (and is shown to be H\"older-continuous
in time for the $L^1$ norm).
The connections between the generalized level set motion and Brakke solutions has been investigated in \cite{Ilmanen94}.
 A simple proof of convergence of the Almgrem-Taylor-Wang (ATW) to
the generalized motion is shown in \cite{ChambolleNovaga}, while a
consistency result was already shown in~\cite{ATW}. See
also~\cite{EGK} for a similar convergence proof in a more general
setting (allowing for unbounded surfaces, as in the present paper),
and~\cite{LauxOtto15} for new proofs and a generalization to partitions.
Roughly speaking, it turns out that whenever fattening does not occur, the generalized level set motion coincides with the ATW flow and is also a solution in the sense of Brakke.

Let us now  consider a crystalline anisotropy. This case is more difficult, due to the lack of smoothness in the involved differential operators. Indeed, the crystalline normal $\nabla \p(\nu^E)$ is not uniquely defined for some directions and one needs to look at  suitable selections of the (multivalued) subdifferential map, that is,   vector fields  $z:\pa E\to \R^N$, such that $z(x)\in \pa\p(\nu^E(x))$ for a.e. $x$. If there exists an admissible field $z$ with tangential divergence $\Div_\tau z$ in $L^2(\pa E)$, then the crystalline curvature is given by the tangential divergence of $z$, where $\Div_\tau z$ has minimal $L^2$-norm  among all admissible fields (see \cite{BeNoPa,GigaGigaPozar}). In particular, the crystalline curvature has a nonlocal character.  

Showing (even local-in-time) existence and uniqueness for crystalline mean curvature flows is somewhat harder and still largely open. Only in  dimension 2, the problem has been settled  by developing a crystalline version of the viscosity approach for the level-set equation, see \cite{GigaGiga01}. If the initial
set is itself an appropriate planar crystal, the evolution equation boils down
to a system of ODEs which has been studied in many former works,
see in particular~\cite{AlmTay95, AngGu, AngGu89, GigaGiga98, GiGu96},
while existence and uniqueness
of strong solutions for initial ``regular'' (in an appropriate
sense) sets was shown recently in~\cite{ChaNov-Crystal15}.
One advantage of the level-set approach of~\cite{GigaGiga01} is the
ability to address much more general equations where the speed
depends on the crystalline curvature and the normal in an non-linear way.

In dimensions $N\geq 3$, the only general available notion of global-in-time solution we are aware of is the minimizing movements motion provided by the ATW scheme; however, no general comparison results have been established so far.
In fact, the  higher-dimensional uniqueness results we know of deal with special classes of initial data (for instance convex initial data as in  \cite{CaCha, BelCaChaNo} or polyhedral sets as in \cite{GiGuMa98}) or with very specific anisotropies (see \cite{GigaGigaPozar} where a comparison principle valid in all dimensions has been established for the anisotropy $\p(\nu)=|\nu'|+|\nu_N|$, with $\nu_N:=\nu\cdot e_N$ and $|\nu'|$ the Euclidean norm of the orthogonal projection of $\nu$ onto $e_N^{\perp}$). However, Y.~Giga has recently announced a
very general existence and uniqueness result in the viscosity sense in dimension $N=3$.

In this paper we prove a global-in-time existence and uniqueness (up to possible fattening) result for the crystalline mean curvature flow {\em valid in all dimensions, for arbitrary (possibly unbounded) initial sets, and for general crystalline anisotropies $\p$},  but under the particular choice $m=\p$ in \eqref{oee}. We do so by providing a suitable weak formulation of the problem and then by showing that such a notion yields a comparison principle. We then implement a variant of the ATW scheme to establish an existence result. 

Le us describe our approach in more details. It is based on ideas
of~\cite{Soner93,AmbrosioSoner}. 
 In order to motivate our formulation, let us  assume for a moment that $\p$ is smooth and that 
$t\mapsto E(t)$ is a regular flow obeying \eqref{oee}. Set $d(\cdot, t):=\dist(\cdot, E(t))$, where $\dist$ denotes the distance induced by the polar norm $\po$ (see \eqref{polar} and \eqref{polardist} below). Then it is easy to see that the time partial derivative $\partial_td$ of $d$ on $\pa E(t)$ equals  $-V/\p(\nu^{E(t)})$, with $V$ denoting the outer normal velocity of the moving boundary. 
On the other hand, this quantity $V/\p(\nu^{E(t)})$ is nothing else as
the speed of the moving boundary along the \textit{Cahn-Hoffmann} normal
$\nabla\p(\nu^{E(t)})$, see~\cite{Gurtin93,BeNoPa}.
Thus, \eqref{oee} may be rewritten as 
$$
\partial_td=\kappa_\p^{E(t)}= \Div (\nabla\p(\nabla d)) \qquad\text{on $\pa E(t)=\pa \{d(\cdot, t)=0\}$}.
$$
(Here and throughout the paper $\nabla$ stands for the spatial gradient.) On the other hand, if we look at a  positive $s$-level set of $d$, the  (weighted) normal velocity of $x\in \{\dist(\cdot, t)=s\}$ equals the normal velocity of its projection $y$ on   $\pa E(t)$, which is given by the anisotropic curvature $\kappa_\p^{E(t)}(y)$ of $\pa E(t)$ at $y$. Since (as long as the surfaces are
smooth)
$$
\kappa_\p^{\{d(\cdot, t)=s\}}(x)= \Div (\nabla\p(\nabla d))(x,t)\leq \kappa_\p^{E(t)}(y),
$$
 we deduce that
\begin{equation}\label{distform1}
\partial_td\geq \Div (\nabla\p(\nabla d))\qquad \text{in $\{d>0\}$}
\end{equation}
as long as $E(\cdot)$ is nonempty. In words, the positive level sets of the distance function shrink with a velocity which is higher than that given by the anisotropic curvature, and thus they may be regarded as  super-flows or {\em supersolutions} of the geometric motion.
Analogously, setting $d^c(\cdot, t):=\dist(\cdot, E^c(t))$, where $E^c$ stands for the  complement of $E$, we have
\begin{equation}\label{distform2}
\partial_td^c\geq \Div (\nabla\p(\nabla d^c))\qquad \text{ in $\{d^c>0\}$}
\end{equation}
as long as $E^c(\cdot)$ is nonempty.
We may conclude that a smooth flow  
$t\mapsto E(t)$ of sets solves \eqref{oee} if and only if  \eqref{distform1} and \eqref{distform2} are satisfied. 

As already remarked before, when $\p$ is crystalline $\nabla\p(\nabla d)$ may not be defined and must be replaced in general by a suitable selection of the subdifferential map, that is, by a vector-field $z\in L^{\infty}(\{d>0\}; \R^N)$ such that $z(x)\in \pa\p(\nabla d(x))$ for a.e. $x$, where $\pa\p$ denotes the subdifferential of $\p$. Any such  $z$ will be called {\em admissible for $d$}.

The above discussion motivates the following weak formulation of the crystalline flow: we will say that a one-parameter family  $t\mapsto E(t)$ of closed sets, satisfying suitable continuity properties (see Definition~\ref{Defsol}  below) is a {\em weak supersolution} of \eqref{oee} with initial datum $E^0$ if  $E(0)\subseteq E^0$ and there exists a vector-field $z$, admissible for $d$,  such that   \eqref{distform1}  hold in the sense of distributions, with $\nabla\p(\nabla d)$  replaced by $z$.   We will say instead that $t\mapsto E(t)$ is a weak-subsolution of \eqref{oee} if 
 $E(0)\supseteq E^0$ and  $t\mapsto (\mathring{E}(t))^c$  is  weak supersolution. Finally, we will say that $t\mapsto E(t)$ is a {\em weak solution} if it is both a weak sub- and a supersolution (with initial datum $E^0$).
Mostly for technical reasons, we will require in addition
that the positive part of $\Div z$ is bounded in $\{d\geq \delta\}$ for all $\delta>0$.

Let us notice that this formulation of the curvature flow in terms of the distance function has been already exploited for the standard mean curvature motion and its regular anisotropic variants. In fact, it is  close in spirit to the distance formulation proposed and studied in \cite{Soner93}, although it  is somewhat stronger as it require the differential inequalities to hold in a distributional sense,  rather than in the viscosity sense considered in \cite{Soner93}.
In this respect, our formulation is reminiscent 
of the approach developed in  \cite{CaCha}. 

We now describe the plan of the paper. In Section~\ref{sec:wf}, after recalling some preliminaries definitions and introducing the main notation, we give the precise weak formulation of the sub- and supersolutions to the anisotropic mean curvature flow.  In Section~\ref{sec:comp} we establish a comparison principle between sub- and supersolutions, which by standard arguments yields the uniqueness of the crystalline flow whenever fattening does not occur. 
We remark that the distributional formulation described above allows for a proof of the comparison,   which is closer in spirit to the uniqueness proofs for standard parabolic equations. In particular, our argument is more elementary than the typical   ``viscosity''  proof that is based on delicate regularization procedures and  fine differentiability properties of semiconvex functions. 
In Section~\ref{sec:ATW} we provide an existence results for the the weak formulation of the crystalline flow, which is based on the reformulation of the minimizing movements scheme of Almgren-Taylor-Wang / Luckhaus-Sturzenhecker introduced
 in~\cite{Chambolle,CaCha}. 
Such a variant can be considered as a combination of the ideas of \cite{ATW} and the threshold dynamics algorithm studied in \cite{Evans93}, and has several advantages: for instance, it makes it easier to establish a comparison principle for the discrete-in-time evolutions and it works equally well for bounded and unbounded sets (as already exploited in~\cite{EGK}). In the main theorem of the section, we establish the convergence of the minimizing movements scheme to 
a weak solution, whenever no fattening occurs. 

We conclude this introduction by commenting on the restriction $m=\p$ in \eqref{oee}. Although such a mobility is rather natural (for instance it forces  Wulff shapes to evolve in a self-similar way),  it is not the most general case and different mobilities could be considered as physically interesting. However, at the moment, in the crystalline case we are able to provide the right convergence estimates for the minimizing movements scheme only under this assumption;  the main technical reason is  related to the fact that if $\dist$ is the distance induced by the polar norm $\po$, then  the crystalline curvatures of the positive level sets of  $\dist(\cdot, E)$ are bounded above (this can be easily understood since in this case the sublevel sets of    $\dist(\cdot, E)$ admit an inner tangent Wulff shape at all points of the boundary). 
Nevertheless, we remark that in the case of a smooth elliptic anisotropy, all our results and methods would work with {\em any} mobility $m$, thus showing that the viscosity solutions already studied in \cite{EvansSpruckI, CGG, Soner93}  satisfy in fact a stronger (distributional) formulation.
The extension of our results to more general mobilities in the crystalline case will be the subject of future investigations. 



\section*{Acknowledgements}
Part of this research was done in the Institut Henri Poincar\'e
in Paris, where M.~Morini and M.~Ponsiglione were hosted for a month
in 2015 thanks to the ``Research in Paris'' programme of this mathematical
institute. The authors are very grateful for this support.
In addition, A.~Chambolle was partially supported by the ANR, programs
ANR-12-BS01-0014-01 ``GEOMETRYA'' and ANR-12-BS01-0008-01  ``HJnet''.

\section{A weak formulation of the crystalline mean curvature flow}\label{sec:wf}
In this section we introduce a suitable weak formulation  of the crystalline mean curvature flow. Such a notion of solution resembles the formulation  due to \cite{Soner93}. However, here we will not consider the viscosity setting of \cite{Soner93} and we will rather be concerned with distributional solutions (which appear for instance in \cite{CaCha}).
\subsection{Preliminaries}
In this subsection we  introduce the main objects and notation used throughout the paper.

Let $\p$ denote a fixed norm on $\R^N$, that is, a convex, even and $1$-homogeneous real-valued function, which will play the role of the anisotropic interfacial energy density.
In the terminology of crystal growth this is also called {\em  surface tension}. Note that we do  not assume any further regularity on $\p$ and in fact the main case of interest is when $\p$ is {\em crystalline}, that is, when the associated unit ball is a convex polytope.  
The interfacial energy is then given by
$$
P_\p(E):=\sup\biggl\{\int_E\Div \zeta\, dx: \zeta\in C^1_c(\R^N; \R^N),\, \po(\zeta)\leq 1\biggr\}\,,
$$
where we recall that the {\em polar norm} $\po$ is defined as
\begin{equation}\label{polar}
\po(\xi):=
\sup_{\p(\eta)\le 1}\eta\cdot\xi\,.
\end{equation}
It can be checked that $P_\p(E)$ is finite if and only if $E$ is a set of finite perimeter and, in this case,
$$
P_\p(E)=\int_{\pa^*E}\p(\nu^E)\,d\H^{N-1}\,, 
$$
where $\pa^*E$ denotes the so-called reduced boundary of $E$ (see for instance \cite{AmFuPa:00}).
More generally, given a function $u\in BV_{loc}(\R^N)$   we may consider the {\em anisotropic total variation maesure} of $u$, which  on the open (bounded if $u\not \in BV(\R^N)$) subsets $\Omega\subset\R^N$ is defined
as
$$
\p(Du)(\Omega):=\sup\biggl\{\int_\Omega u\, \Div \zeta\, dx: \zeta\in C^1_c(\Omega; \R^N),\, \po(\zeta)\leq 1\biggr\}.
$$ 
  Because of the homogeneity of $\p$ it turns out that 
$\p(Du)$ coincides with the nonnegative Radon measure in $\R^N$ given by $\nabla u\, dx+ \p\left(\frac{D^s u}{|D^s u|}\right)|D^s u|$, where
$\nabla u$ stands for the absolutely continuous part of $Du$ and $\frac{D^s u}{|D^s u|}$ denotes the Radon-Nykodim derivative of the singular part $D^su$ of $Du$ with respect to its (isotropic) total variation $|D^su|$, see \cite{AmFuPa:00}.

Among the important properties of $\p$ and $\po$ let us mention the fact
that $\partial\p(0)=\{\xi:\po(\xi)\le 1\}$ while $\partial\po(0)=\{\xi:\p(\xi)\le 1\}$.
Moreover, for $\eta\neq 0$
\beq\label{subp}
\partial\p(\eta) =\{\xi:\po(\xi)\le1\textup{ and }\xi\cdot \eta=\p(\eta)\}= \{\xi:\po(\xi)=1\textup{ and }\xi\cdot \eta=\p(\eta)\}
\eeq
(and the symmetric statement for $\po$).
An easy  consequence of the above characterization is that if $\eta\in \partial \po(x)$
and $x\neq 0$, then $x/\po(x)\in \partial \p(\eta)$.

The set 
$$
W(0,1):=\{y:\po(y)\le 1\}
$$ 
is called the {\em Wulff shape} associated with $\p$.  More 
generally, for  $x\in\R^N$ and $R > 0$, we will denote by
$$
W(x,R):=\{y:\po(y-x)\le R\}
$$ 
the {\em Wulff shape of radius $R$ and center $x$}.
In the Finsler metric framework associated with $\po$,  Wulff shapes play the same role as standard balls do in the Euclidean setting. In particular,   
it is well-known that $W(0,R)$ is the unique (up to translations) solution of the anisotropic isoperimetric problem 
$$
\min\left\{P_\p(E):\,  |E|=|W(0,R)|\right\},
$$
see for instance~\cite{FonsecaMuller}.


Given a  set $E\subseteq \R^N$, we denote by $\dist(\cdot, E)$ the  distance from $E$ induced by $\po$, that is, for any $x\in \R^N$
\begin{equation}\label{polardist}
\dist(x, E):=\inf_{y\in E}\po(x-y)
\end{equation}
if $E\neq\emptyset$ and  $\dist (x,\emptyset):= + \infty$.  
Moreover,  we denote by $\dd_E$ the signed distance from $E$ induced by $\po$, i.e., 
$$
\dd_E(x):= \dist(x,E) - \dist (x,E^c)
$$
so that $\dist(x,E)=\dd_E(x)^+$ and $\dist(x,E^c)=\dd_E(x)^-$ (here and throughout the
paper we adopt the  standard notation  $t^+:=t\lor  0$ and $t^-:=(-t)^+$).
Note that $\p(\nabla d_{E})=1$ a.e.~in $\R^N\setminus \partial E$.

We finally recall the notion of Kuratowski convergence. We say that a sequence of closed sets $E_n$ in $\R^m$ converges to a closed set $E$ in the Kuratowki sense, and we write
$$
E_n\stackrel{\mathcal K}{\longrightarrow} E,
$$
if the following conditions are satisfied:
\begin{itemize}
\item[(i)] if $x_n\in E_n$, any limit point of $\{x_n\}$ belongs to $E$;
\item[(ii)] any $x\in E$ is the limit of a sequence $\{x_n\}$, with $x_n\in E_n$.
\end{itemize}
One can easily see that $E_n\stackrel{\mathcal K}{\longrightarrow} E$ if and only if $\dist(\cdot, E_n)\to \dist(\cdot, E)$ locally uniformly in $\R^m$ (here one may consider the distance associated to any norm). In particular, by the Ascoli-Arzel\`a Theorem, any sequence of closed sets admits a  subsequence which converges in the Kuratowski sense.  
\subsection{A weak formulation of the crystalline flow}
In this subsection we introduce the  weak formulation of the crystalline flow we will deal with. We refer the reader to the introduction for the motivation  behind this definition. 
\begin{definition}\label{Defsol}
Let $E^0\subset\R^N$ be 
a closed set. 
Let $E$ be a closed set in $\R^N\times [0,+\infty)$ and
for each $t\geq 0$ denote $E(t):=\{x\in \R^N:\, (x,t)\in E\}$. We
say that $E$ is a {\em supersolution} of the curvature flow \eqref{oee} with
initial datum $E^0$ if
\begin{itemize}
\item[(a)] $E(0)\subseteq {E}^0$;
\item[(b)] 
for all $t\ge 0$ if ${E}(t)=\emptyset$, then $E(s)=\emptyset$ for all $s > t$;
\item[(c)]  
$E(s)\stackrel{\mathcal K}{\longrightarrow} E(t)$ as $s\nearrow t$ for all $t>0$ (left-continuity);

\item[(d)] setting $d(x,t):=\dist (x, E(t))$ for  $(x,t)\in \R^N\times (0,T^*)\setminus E$ and 
 $$
 T^*:=\inf\{t>0:\, E(s)=\emptyset \text{ for $s\geq t$}\}\,,
 $$
then the inequality
\begin{equation}\label{eq:supersol}
 \partial_t d \ge \Div z
\end{equation}
holds in the distributional sense in $\R^N\times (0,T^*)\setminus E$
for a suitable $z\in L^\infty(\R^N\times (0,T^*))$ such that
$z\in \partial\p(\nabla d)$~a.e., $\Div z$ is a Radon measure in $\R^N\times (0,T^*)\setminus E$, and 
$(\Div z)^+\in L^\infty(\{(x,t)\in\R^N\times (0,T^*):\, d(x,t)\geq\delta\})$ for every $\delta>0$.
\end{itemize}

We say that $A$, open set in $\R^N\times [0,+\infty)$, is
a subsolution with initial datum $E^0$ if $A^c$ is a supersolution with initial datum $(\mathring{E}^0)^c$.

Finally, we say that $E$, closed set in $\R^N\times [0,+\infty)$,  is a solution with initial datum $E^0$ if it is a supersolution and if $\mathring{E}$ is a subsolution, both with initial datum $E^0$.
\end{definition}


\begin{remark}\label{rm:measure}
Notice that the initial condition for supersolutions may be rewritten as $ \mathring E^0 \subseteq A(0)$.
In particular,  if $\partial{E}^0=\partial\mathring{E}^0$ and $E$ is a solution according to the previous definition, then $E(0)=E^0$.

\end{remark}
\begin{remark}\label{rm:visco}
If $\p$ is $C^2$, then one can check that this definition
is stronger than the definition in the viscosity sense (see in 
particular~\cite{Soner93,BaSoSou}).
\end{remark}
We start by observing some useful continuity properties of the map $d$ introduced in the previous definition. 

\begin{lemma}\label{lem:rightcont} Let $E$ be a supersolution. 
Then, for each $t\in  [0,T^*)$, 
$d(\cdot,s)$ converges locally uniformly in $\{x: d(x, t)>0\}$ as $s\searrow t$
to for some function $ d^r$ with $d^r \geq d(\cdot, t)$ in $\{x: d(x, t)>0\}$.
\end{lemma}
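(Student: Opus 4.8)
The plan is to obtain the limit from three ingredients: a uniform spatial Lipschitz bound on the functions $d(\cdot,s)$ (for compactness), the closedness of $E$ in space-time (for lower semicontinuity, hence for the inequality $d^r\ge d(\cdot,t)$), and the one-sided distributional inequality \eqref{eq:supersol} together with the mere boundedness of $z$ (to pin down the limit when tested against a fixed spatial function).

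First I would record two elementary facts. Since $\po$ is a norm, each $d(\cdot,s)=\dist(\cdot,E(s))$ is $1$-Lipschitz with respect to $\po$, uniformly in $s$. Moreover, since $E$ is closed in $\R^N\times[0,+\infty)$ and $E(s)\ne\emptyset$ for $s<T^*$ (so that the infimum defining $d$ is attained on a compact $\po$-ball), the map $(x,s)\mapsto d(x,s)$ is jointly lower semicontinuous near any $t<T^*$. In particular $\liminf_{s\searrow t}d(x,s)\ge d(x,t)$ for every $x$, and a short compactness argument upgrades this to a uniform positivity statement: for every compact $K\subset\{x:\,d(x,t)>0\}$ there are $\delta>0$ and $\e>0$ with $d(\cdot,s)\ge\delta$ on $K$ for all $s\in[t,t+\e]$ and $[t,t+\e]\subset(0,T^*)$. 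This is what makes \eqref{eq:supersol}, which is only available in $\{d>0\}$, usable as $s\searrow t$.

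The heart of the matter is to show that the \emph{full} limit exists, not merely subsequential ones; here I would use that $z\in L^\infty$ (with $\po(z)\le1$ by \eqref{subp}). Fix $\psi\in C^1_c(\{x:\,d(x,t)>0\})$ with $\psi\ge0$ and $\spt\psi\subset K$ as above, and test \eqref{eq:supersol} against $\psi(x)\eta(t)$ with $\eta\ge0$ supported in $(t,t+\e)$; this is legitimate because $d>0$ on $K\times[t,t+\e]$. Writing $h(s):=\int d(x,s)\,\psi\,dx$ and using $|\int z\cdot\nabla\psi\,dx|\le M$ with $M:=\|z\|_{L^\infty}\|\nabla\psi\|_{L^1}$, the tested inequality reduces to the distributional statement that $s\mapsto h(s)+Ms$ is non-decreasing on $(t,t+\e)$. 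A non-decreasing function has a right limit, so $\lim_{s\searrow t}h(s)$ exists. Choosing in addition $\psi\equiv1$ on a ball $B$, the same monotonicity bounds $h(s)$ by $h(t+\tfrac\e2)+M\e$ for $s$ near $t$, which together with the Lipschitz bound yields a uniform local bound on $d(\cdot,s)$ for $s$ close to $t$.

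Finally I would assemble the pieces. By the equi-Lipschitz bound and this uniform local bound, $\{d(\cdot,s)\}$ is precompact in the topology of local uniform convergence on $\{x:\,d(x,t)>0\}$ (Ascoli--Arzel\`a). Any subsequential limit $d^r$ satisfies $\int d^r\,\psi\,dx=\lim_{s\searrow t}h(s)$ for every admissible $\psi$, and the right-hand side is independent of the subsequence; hence the subsequential limit is unique, and the whole family converges locally uniformly to it. The inequality $d^r\ge d(\cdot,t)$ is then exactly the lower-semicontinuity bound of the first step. The main obstacle is precisely this uniqueness of the limit: \eqref{eq:supersol} is one-sided ($\partial_t d\ge\Div z$) and therefore cannot control $d$ from above pointwise; the device that saves the argument is that, tested against a fixed nonnegative $\psi$ and combined only with the boundedness of $z$, it nonetheless produces a genuinely monotone scalar function of time. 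The secondary technical point is guaranteeing, through the lower-semicontinuity-based uniform positivity, that the test functions remain inside $\{d>0\}$, where the differential inequality holds.
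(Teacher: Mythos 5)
Your proof is correct, but it follows a genuinely different route from the paper's. The paper's argument is shorter and hinges on the hypothesis that $\Div z$ is a Radon measure: since $\partial_t d-\Div z\ge 0$ is a nonnegative distribution, $\partial_t d$ is itself a Radon measure, so $d$ is locally $BV$ in $\R^N\times(0,T^*)\setminus E$; by $BV$ slicing, for a.e.\ $x$ the map $s\mapsto d(x,s)$ has a right limit at every $t$, and the equi-Lipschitz bound in space upgrades this to local uniform convergence. You instead bypass the $BV$ machinery entirely: you only use $z\in L^\infty$ (so that $\bigl|\int z\cdot\nabla\psi\,dx\bigr|\le M$) to show that the spatial averages $h(s)=\int d(\cdot,s)\psi\,dx$ satisfy that $h(s)+Ms$ is distributionally non-decreasing, hence have right limits; combined with Ascoli--Arzel\`a compactness of the equi-Lipschitz family, the averages single out a unique subsequential limit and force full convergence. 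What your approach buys is a mild gain in economy of hypotheses (the Radon-measure property of $\Div z$ is never invoked in this lemma, only the boundedness of $z$) and an argument that is perhaps more elementary, at the cost of the extra compactness-plus-uniqueness step. Both proofs use the closedness of $E$ identically to get $d(x,t)\le\liminf_{s\to t}d(x,s)$ and hence $d^r\ge d(\cdot,t)$, and your lower-semicontinuity/compactness argument guaranteeing that the test functions stay inside $\{d>0\}$ is a point the paper leaves implicit. One shared caveat: both arguments really produce the right limit of the precise (monotone, resp.\ $BV$) representative, i.e.\ the limit along a.e.\ $s\searrow t$; your version is at exactly the same level of rigor as the paper's on this point, so it is not a gap specific to your proposal.
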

\begin{remark}
Observe that by condition (c) in the definition (which is mostly
technical and forbids artificial constructions such as a supersolution
which jumps to $E(t)=\R^N$ at a given time $t>0$),
$t\mapsto d(\cdot,t):=d(\cdot, E(t))$ is left-continuous with respect
to the local uniform convergence.
\end{remark}

\begin{proof} 
By condition  (d) of Definition~\ref{Defsol},  the distributional derivative 
$ \partial_t d$ is a Radon
measure in $\R^N\times (0,T^*)\setminus E$,
so that $d$ is locally a function with bounded
variation
in this (open) domain.
In particular, for a.e. $x\in \R^N$  the map
$s\mapsto d(x,s)$ has a right limit $d^r(x,t)$ at each time $t\in [0, T^*)$ such that $d(x,t)>0$.  Since the functions $d(\cdot,s)$ are also equi-Lipschitz
in space as $s$ varies, we may conclude that  the  right limit is in fact locally uniform in $\{x: d(x,t)>0\}$. 

Since $E$ is closed, for every $t\in [0, T^*]$  we clearly have that all Kuratowski cluster points of $E(s)$ as $s\to t$ are contained in $E(t)$,
equivalently, $d(x,t)\le \liminf_{s\to t} d(x,s)$.
Thus, $d^r\geq d(\cdot, t)$ in $\{x: d(x,t)>0\}$.
\end{proof}

%


\section{Comparison results}\label{sec:comp}

 
In this section we prove the main comparison principle between sub- and supersolutions (see Theorem~\ref{th:compar}). 
In Lemma~\ref{lem:compwulff} below,  we  establish a  first (suboptimal) comparison result between  a supersolution and 
a suitable anisotropic total variation flow (see \cite{BeCaNo02, Moll05}). 
To this aim, we give  an explicit solution to the anisotropic total variation flow  with initial datum $\po$. 

\begin{lemma}\label{lm:explicit}
The pair $(f, \zeta)$ defined by
\begin{equation}\label{eq:soltvf}
f(x,t):=
\begin{cases}
 r(t)+t\frac{N-1}{r(t)} & \textup{ if } \po(x)\le 
r(t):=\sqrt{(N+1)t},\\  
  \po(x) +t\frac{N-1}{\po(x)} & \textup{ otherwise}
\end{cases}
\end{equation}
and 
\begin{equation}\label{eq:soltvfzeta}
\zeta(x,t) :=
\begin{cases}
 \frac{x}{r(t)} & \text{if }\po(x)\le r(t), \\
 \frac{x}{\po(x)} & \text{if }\po(x)\ge r(t), 
\end{cases}
\end{equation}
solve  the following Cauchy problem for the $\p$-total variation flow in $\R^N$:
\begin{equation}\label{eq:tvflow1000}
\begin{cases}
 \partial_t f = \Div \zeta & \text{a.e. in }\R^N\times (0,+\infty),\\
 \zeta\in \pa\p(\nabla f) & \text{a.e. in }\R^N\times (0,+\infty),\\
f(\cdot,0)=\po.
\end{cases}
\end{equation}
Moreover,  given $\lambda>1$, the  pair $(f_\lambda, \zeta_\lambda)$ given by
\[
f_\lambda(x,t) := \lambda f(x,t/\lambda) \qquad \zeta_{\lambda} (x,t):= \zeta(x,t/\lambda)
\]
for $(x,t)\in \R^N\times (0,+\infty)$ solves \eqref{eq:tvflow1000}, with the initial datum $\po$ replaced by $\lambda\po$. 

\end{lemma}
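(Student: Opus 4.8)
The plan is to verify the three requirements in \eqref{eq:tvflow1000} by a direct computation, treating separately the two regions $\{\po(x)<r(t)\}$ and $\{\po(x)>r(t)\}$ and then checking that the candidate pair $(f,\zeta)$ glues together across the free boundary $\{\po(x)=r(t)\}$. First I would record that both $f$ and $\zeta$ are continuous across this interface: when $\po(x)=r(t)$ the inner and outer expressions for $f$ both equal $r(t)+t(N-1)/r(t)$, and those for $\zeta$ both equal $x/r(t)$. Since $f(\cdot,t)$ is Lipschitz in space (spatially constant inside, with bounded spatial gradient outside) and $\zeta$ is bounded (indeed $\po(\zeta)\le1$) and Lipschitz, the a.e.\ identities below will also hold distributionally, with no spurious measure concentrated on the interface. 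The initial condition follows by letting $t\searrow0$: for $x\ne0$ one has $\po(x)>r(t)$ eventually, so $f(x,t)\to\po(x)$, while $f(0,t)=r(t)\,2N/(N+1)\to0=\po(0)$.

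For the inclusion $\zeta\in\pa\p(\nabla f)$, in the inner region $f$ is independent of $x$, so $\nabla f=0$ and I must check $\zeta=x/r(t)\in\pa\p(0)=\{\xi:\po(\xi)\le1\}$; this holds because $\po(x/r(t))=\po(x)/r(t)\le1$ precisely when $\po(x)\le r(t)$. In the outer region a computation gives $\nabla f=\bigl(1-t(N-1)/\po(x)^2\bigr)\nabla\po(x)$, and the scalar factor is positive since $\po(x)^2>(N+1)t$ forces $t(N-1)/\po(x)^2<(N-1)/(N+1)<1$. As $\p$ is $1$-homogeneous, $\pa\p$ is $0$-homogeneous, whence $\pa\p(\nabla f)=\pa\p(\nabla\po(x))$; the duality fact from the preliminaries, namely that $\eta\in\pa\po(x)$ with $x\ne0$ implies $x/\po(x)\in\pa\p(\eta)$, applied with $\eta=\nabla\po(x)$ gives exactly $\zeta=x/\po(x)\in\pa\p(\nabla f)$, valid on the full-measure set of differentiability points of $\po$.

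Next I would check $\pa_t f=\Div\zeta$ a.e. For $\Div\zeta$: in the inner region $\zeta=x/r(t)$ yields $\Div\zeta=N/r(t)$, while in the outer region, using Euler's identity $x\cdot\nabla\po(x)=\po(x)$ for the $1$-homogeneous $\po$, one gets $\Div(x/\po(x))=N/\po(x)-\po(x)/\po(x)^2=(N-1)/\po(x)$. For $\pa_t f$: differentiating the outer expression in $t$ gives $(N-1)/\po(x)$, matching; in the inner region, writing $r(t)^2=(N+1)t$ so that $2rr'=N+1$ and $t=r^2/(N+1)$, the computation of $\pa_t\bigl(r+t(N-1)/r\bigr)$ collapses to $N/r(t)$, again matching. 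This completes the verification of \eqref{eq:tvflow1000}.

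The only point requiring genuine care is the behaviour at the free boundary $\{\po(x)=r(t)\}$: one must ensure that the two branches glue into a Lipschitz-continuous pair, so that the pointwise a.e.\ computations are legitimate and no interfacial contribution is created in $\Div\zeta$ or $\pa_t f$; everything else is routine algebra together with homogeneity and the duality relation. Finally, the rescaling statement is immediate from the scaling invariance of the flow: for $g(x,t):=\lambda f(x,t/\lambda)$ one has $\pa_t g=(\pa_t f)(x,t/\lambda)$ and $\nabla g=\lambda\nabla f(x,t/\lambda)$, so setting $\eta:=\zeta(x,t/\lambda)$ and using the $0$-homogeneity of $\pa\p$ gives $\eta\in\pa\p(\lambda\nabla f(x,t/\lambda))=\pa\p(\nabla g)$ and $\Div\eta=(\Div\zeta)(x,t/\lambda)=(\pa_t f)(x,t/\lambda)=\pa_t g$, while $g(\cdot,0)=\lambda\po$; this is exactly $(f_\lambda,\zeta_\lambda)$ solving \eqref{eq:tvflow1000} with initial datum $\lambda\po$.
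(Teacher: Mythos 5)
Your verification is correct and follows exactly the route the paper intends: the paper's proof consists of the remark that $\zeta\in\pa\p(\nabla f)$ is equivalent to $\po(\zeta)\le 1$ and $\zeta\cdot\nabla f=\p(\nabla f)$, followed by ``the proof follows by direct verification; the details are left to the reader.'' Your computation (the two regions, the gluing across $\{\po(x)=r(t)\}$, the use of Euler's identity and the duality relation from the preliminaries, and the scaling argument) is precisely that direct verification, carried out in full and without gaps.
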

\begin{proof}
Recalling that $\zeta\in \pa\p(\nabla f)$ is equivalent to  $\po(\zeta)\le 1$, $\zeta\cdot\nabla f=\p(\nabla f)$ (see \eqref{subp}), the proof follows by direct verification. The details are left to the reader.
\end{proof}

Next lemma  provides a first  comparison estimate, which is far from being sharp. However, the optimal estimate  can be established  a posteriori as a consequence of our main comparison theorem (see Theorem~\ref{th:compar} below).

\begin{lemma}\label{lem:compwulff}
Let $E$ be a supersolution and $d:=\dist(\cdot, E(\cdot))$ the associated one parameter family of distance functions.  Assume that for some $(\bar x, \bar t)\in \R^N\times [0,+\infty)$ we have  $d(\bar x,\bar t)\geq R>0$.
Then, there exists a constant $\chi_N>0$ 
such that  $d(\bar x,\bar t+s)\ge R-\chi_N\sqrt{s}$ for all $s\in [0, R^2/(16\chi_N^2)]$.
\end{lemma}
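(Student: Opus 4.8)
The plan is to use the explicit total variation flow of Lemma~\ref{lm:explicit}, recentred at $\bar x$ and reflected, as a lower barrier for $d$, and to compare the two by an elementary $L^2$-energy estimate that exploits the monotonicity of $\partial\p$. First I would record the value of the explicit solution at the centre: a direct computation from \eqref{eq:soltvf} gives
\[
f(0,s)=r(s)+s\tfrac{N-1}{r(s)}=\Bigl(\sqrt{N+1}+\tfrac{N-1}{\sqrt{N+1}}\Bigr)\sqrt s=\tfrac{2N}{\sqrt{N+1}}\sqrt s,
\]
so it is natural to set $\chi_N:=\frac{2N}{\sqrt{N+1}}$, whence $f(0,s)=\chi_N\sqrt s$.

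Since $\p$ is even we have $\pa\p(-\eta)=-\pa\p(\eta)$, so if $(f,\zeta)$ solves \eqref{eq:tvflow1000} then $(R-f,-\zeta)$ does as well. Hence the pair $g(x,s):=R-f(x-\bar x,s)$, $\zeta_g(x,s):=-\zeta(x-\bar x,s)$ solves $\pa_s g=\Div\zeta_g$ with $\zeta_g\in\pa\p(\nabla g)$ a.e., and $g(\cdot,0)=R-\po(\cdot-\bar x)$. Because $d(\cdot,\bar t)=\dist(\cdot,E(\bar t))$ is $1$-Lipschitz for $\po$, the triangle inequality and $d(\bar x,\bar t)\ge R$ give $d(x,\bar t)\ge R-\po(x-\bar x)=g(x,0)$ for every $x$; in particular $\{g(\cdot,0)>0\}=\mathring{W}(\bar x,R)\subseteq\{d(\cdot,\bar t)>0\}$.

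The core is a comparison principle showing $d(\cdot,\bar t+s)\ge g(\cdot,s)$. Writing $w:=(g-d)^+$ (which is compactly supported in space, since $f\to+\infty$ at infinity forces $g\to-\infty$) and differentiating $\tfrac12\int w^2\,dx$ in time, I would use $\pa_s d\ge\Div z$ and $\pa_s g=\Div\zeta_g$ to obtain
\[
\frac{d}{ds}\,\tfrac12\!\int w^2\,dx=\int_{\{g>d\}}\! w\,\pa_s(g-d)\,dx\le\int_{\{g>d\}}\! w\,\Div(\zeta_g-z)\,dx=-\int_{\{g>d\}}\!(\nabla g-\nabla d)\cdot(\zeta_g-z)\,dx,
\]
where I integrated by parts using that $w$ vanishes on $\{g=d\}$ and is compactly supported. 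The last integrand is nonnegative by the monotonicity of the subdifferential $\pa\p$, since $\zeta_g\in\pa\p(\nabla g)$ and $z\in\pa\p(\nabla d)$, so the right-hand side is $\le0$. As $w(\cdot,0)\equiv0$, this forces $w\equiv0$, i.e.\ $g\le d$; evaluating at $x=\bar x$ yields $d(\bar x,\bar t+s)\ge g(\bar x,s)=R-\chi_N\sqrt s$, which is the claim.

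The main obstacle is making the integration by parts and the energy estimate rigorous given the low regularity: $d$ is only Lipschitz in space and $BV$ in time, $\Div z$ is merely a Radon measure with $(\Div z)^+\in L^\infty$ only away from $E$, and—most delicately—the supersolution inequality \eqref{eq:supersol} holds only in $\{d>0\}$, so I must keep the support of $w$ inside $\{d>0\}$. To this end I would run the estimate with the strictly faster barriers $g_\lambda:=R-f_\lambda(\cdot-\bar x,\cdot)$ from Lemma~\ref{lm:explicit} for $\lambda>1$ (so that $\{g_\lambda>0\}$ is a \emph{strictly} shrinking Wulff shape), localise to $\{d\ge\delta\}$ with a spatial cutoff on a short time interval, and then let $\delta\to0$ and $\lambda\to1^+$. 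The restriction $s\le R^2/(16\chi_N^2)$ enters precisely here: it guarantees, via a continuity/bootstrap argument in the time variable, that throughout $[0,s]$ the positive set $\{g_\lambda(\cdot,\sigma)>0\}$ stays well inside $\{d(\cdot,\bar t+\sigma)>0\}$, so that the barrier never reaches $E$ and the comparison stays valid up to time $s$.
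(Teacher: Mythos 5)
Your proposal follows essentially the same route as the paper: the authors also compare $d$ with the reflected, recentred explicit TV-flow solution (they fix the dilation $\lambda=4/3$ so the barrier is negative outside $W(\bar x,3R/4)$, rather than sending $\lambda\to1^+$), run the same monotonicity-of-$\pa\p$ energy estimate with a convex $\Psi$ of the difference, and extend in time by the same bootstrap. The only substantive ingredients you flag but leave unresolved are exactly the ones the paper supplies: the time-jumps of $d$ are nonnegative (Lemma~\ref{lem:rightcont}), and the hypothesis that $(\Div z)^+$ is absolutely continuous lets one pass from the distributional inequality \eqref{eq:supersol} to $\partial_t^d d\ge\Div z$ for the diffuse part, which is what makes the BV chain rule yield $\partial_t w\le\Psi'(\cdot)(\Div\xi-\Div z)$.
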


\begin{proof}
Observe first that thanks to~Lemma~\ref{lem:rightcont},
since $d(\cdot,\bar t)\ge R/4$
in $\{x:\po(x-\bar x)\le 3R/4\}=W(\bar x,3R/4)$,
there exists a (unknown) time $t^*$ such that 
$d(\cdot, \bar t+s)>\alpha>0$ in $W(\bar x,3R/4)$ for all  $s\in[0, t^*]$ for some positive $\alpha$.
We will compare $d$ with the solution $\delta$
of the $\p$-total variation
flow starting from
\[
\delta(\cdot,0): = R-\frac{4}{3}\po(\cdot -\bar x)\,.
\]
More precisely, setting 
 $\delta(x,s):=R- f_{4/3}(x-\bar x,s)$, where $f_{4/3}(x, t):=4/3f(x, 3t/4)$ and $f$ is given by \eqref{eq:soltvf}, by Lemma~\ref{lm:explicit}
 $\delta$ satisfies
 \begin{equation}\label{eq:delta}
\begin{cases}
 \partial_t \delta = \Div \xi & \text{in } \R^N\times (0,+\infty),\\
 \xi \in \pa\p(\nabla \delta) & \text{a.e. in }\R^N\times (0,+\infty),\\
\end{cases}
\end{equation}
 where $\xi(x,t)= -\zeta(x,3t/4)$, with $\zeta$ defined by \eqref{eq:soltvfzeta}. Note that $\delta$
 is negative outside  $W(\bar x,3R/4)$ for  all positive times.

Let $\Psi(s)$ be a smooth, convex, nonnegative  function,
which vanishes only for $s\le 0$, and consider the function
$w(x,s):=\Psi(\delta(x,s)-d(x,\bar t+s))$. Without loss of generality,
we assume to simplify the notation that $\bar t=0$.
By construction, $w(x,0) \equiv 0$ in $W(\bar x,3R/4)$ and $w(\cdot,s)\equiv 0$
on $\partial W(\bar x,3R/4)$ for $0\le s \le t^*$.

Since $\p(\nabla d)\le 1$~a.e.~and $ \partial_t d $ is a measure wherever it is positive, it follows that $d$
is a function in $BV_{loc}(W(\bar x,3R/4)\times (0,t^*))$ and its
distributional time derivative has the form
\[
 \partial_t d  = \sum_{t\in J}[d(\cdot,t+0)-d(\cdot,t-0)]dx + \partial_t^d d 
\]
where $J$ is the (countable) set of times where $d$ 
jumps and $\partial^d_t d$ is the diffuse (Cantor$+$absolutely continuous)
part of the derivative. It turns out that (Lemma~\ref{lem:rightcont})
$d(\cdot,t+0)-d(\cdot,t-0)\ge 0$ for each $t\in J$. Moreover, since the positive part
of  $\Div z$ is absolutely continuous with respect to the Lebesque measure, 
\eqref{eq:supersol} entails
\[
\partial^d_t d\ge \Div z.
\]

Using the chain rule for $BV$ functions, see~\cite{AmbDM}),
one has
\begin{multline*}
\partial_t w=
 \sum_{t \in J} [\Psi(\delta(\cdot,t)-d(\cdot,t+0) )
-\Psi(\delta(\cdot,t)-d(\cdot,t-0) )]dx 
\\+ 
\Psi'(\delta-d)(\partial_t\delta-\partial^d_t d)
\le \Psi'(\delta-d)(\Div \xi - \Div z).
\end{multline*}
Hence, for a.e.~$t\le t^*$, using the fact that  $\phi$ and  $\Psi$ are convex,  $\Psi'(\delta-d)$
vanishes on $\partial W(\bar x,3R/4)$ and recalling \eqref{eq:delta}, we have
\begin{multline*}
\partial_t \int_{W(\bar x,3R/4)} w dx
\le  \int_{W(\bar x,3R/4)} \Psi'(\delta-d)(\Div \xi - \Div z)
\\= -\int_{W(\bar x,3R/4)} (\xi-z)\cdot(\nabla \delta-\nabla d)\Psi''(\delta-d) \le 0.
\end{multline*}
It follows that $w=\Psi(\delta-d)=0$, 
that is, $d\ge \delta$ a.e.~at all times less than $t^*$. More precisely,
 for $0\leq s\le t^*$ we have
\begin{equation}\label{eq:estimsqrtt}
d(\bar x,\bar t+s) \ge R-f_{4/3}(x-\bar x,s)=R-\frac{4N}{\sqrt{3}}\sqrt{\frac{s}{N+1}}=: R-\chi_N\sqrt{s}.
\end{equation}
It follows from \eqref{eq:estimsqrtt} that
$d(\bar x,\bar t+ s) > 3R/4 $ and, in turn, $d(\cdot ,\bar t + s) >0$ on $ \partial W(\bar x,3R/4)$ for all $s< \min\{t^*,R^2/(16\chi_N^2)\}$.
But then we can restart the argument above to find that  \eqref{eq:estimsqrtt} remains valid for slightly larger times.  Thus, we may conclude that
\eqref{eq:estimsqrtt} holds at least for all 
$0\leq s\leq R^2/(16\chi_N^2)$.  This concludes the proof of the lemma.

\end{proof}

Now we can state the main result of this section, which is a comparison
result between sub- and supersolutions.
\begin{theorem}\label{th:compar}
Let $E$ be a supersolution with initial datum $E^0$ and
$F$ be a subsolution with initial datum $F^0$. Assume that
$\dist(E^0,{F^0}^c)=:\Delta>0$. Then for each $t\ge 0$, $\dist(E(t),F^c(t))\ge \Delta$.
\end{theorem}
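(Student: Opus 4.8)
The plan is to reduce the set-distance statement to a scalar comparison between the two distance functions and then to run an energy estimate entirely analogous to the one in the proof of Lemma~\ref{lem:compwulff}, the new feature being that both competitors are genuine distance functions, so that the monotonicity of $\pa\p$ enters through the evenness of $\p$.

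First I would record that, since $F$ is a subsolution with datum $F^0$, the set $F^c$ is a supersolution with datum $(\mathring{F}^0)^c$; writing $d(\cdot,t):=\dist(\cdot,E(t))$ and $d^c(\cdot,t):=\dist(\cdot,F^c(t))$, both satisfy \eqref{eq:supersol} with admissible fields $z_1\in\pa\p(\nabla d)$ and $z_2\in\pa\p(\nabla d^c)$ on $\{d>0\}$ and $\{d^c>0\}$ respectively. Using $(\mathring{F}^0)^c=\overline{(F^0)^c}$ and that the distance to a set equals the distance to its closure, the hypothesis $\dist(E^0,(F^0)^c)=\Delta$ together with $E(0)\subseteq E^0$, $F^c(0)\subseteq(\mathring{F}^0)^c$ gives $\dist(E(0),F^c(0))\ge\Delta$. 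The elementary but crucial observation is that, by the triangle inequality for $\po$ and by evaluating at a closest point, for every $t$ one has $\dist(E(t),F^c(t))=\inf_{x}[d(x,t)+d^c(x,t)]$. Hence the assertion is equivalent to $w:=d+d^c\ge\Delta$ everywhere, and it suffices to show that the nonnegative quantity $\int\Psi(\Delta-w)\,dx$, which vanishes at $t=0$, is nonincreasing in time; here $\Psi$ is the smooth convex nonnegative function vanishing on $(-\infty,0]$ used in Lemma~\ref{lem:compwulff}. (When $E(t)$ or $F^c(t)$ is empty the corresponding distance is $+\infty$ and there is nothing to prove.)

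Next I would differentiate in time. On the region where both $d>0$ and $d^c>0$ the two inequalities \eqref{eq:supersol} add up to $\pa_t w\ge\Div(z_1+z_2)$ in the distributional sense, the time jumps of $d$ and $d^c$ being nonnegative by Lemma~\ref{lem:rightcont} and hence harmless after passing to the diffuse part, exactly as in Lemma~\ref{lem:compwulff}. Testing with $\Psi'(\Delta-w)\ge0$ and integrating the $L^\infty$ fields by parts against this Lipschitz test function yields
\begin{multline*}
\frac{d}{dt}\int\Psi(\Delta-w)\,dx \le -\int\Psi'(\Delta-w)\,\Div(z_1+z_2)\,dx\\
= -\int\Psi''(\Delta-w)\,(z_1+z_2)\cdot(\nabla d+\nabla d^c)\,dx.
\end{multline*}
The key point is that the integrand is nonnegative: since $\nabla d,\nabla d^c\neq0$ on the support, \eqref{subp} gives $z_1\cdot\nabla d=z_2\cdot\nabla d^c=1$ and $\po(z_1)=\po(z_2)=1$, while $\xi\cdot\eta\ge-\po(\xi)\p(\eta)$ yields $z_1\cdot\nabla d^c\ge-1$ and $z_2\cdot\nabla d\ge-1$; adding, $(z_1+z_2)\cdot(\nabla d+\nabla d^c)\ge0$. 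Equivalently, by evenness $-z_2\in\pa\p(-\nabla d^c)$, so this is precisely the monotonicity of the subdifferential $\pa\p$ applied to $z_1\in\pa\p(\nabla d)$ and $-z_2\in\pa\p(-\nabla d^c)$. As $\Psi''\ge0$, the time derivative is $\le0$ and the energy estimate closes.

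The main obstacle is that the distributional inequalities are available only on $\{d>0\}$ and $\{d^c>0\}$, whereas the minimum of $w$ is attained not only along the open connecting $\po$-segment (where both distances are positive) but also at its endpoints on $\pa E(t)$ and $\pa F^c(t)$, where one of the two distances vanishes and the corresponding inequality is lost. The point is that the support $\{w<\Delta\}$ of the test function meets $\{d=0\}$ or $\{d^c=0\}$ only when $\dist(E(t),F^c(t))<\Delta$, i.e.\ only once separation has already failed; as long as $\dist(E(\cdot),F^c(\cdot))\ge\Delta$ the test function is supported in $F(t)\setminus E(t)$ and the boundary contributions in the integration by parts vanish. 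I would therefore argue by continuation: on the maximal interval on which the separation $\ge\Delta$ holds the estimate above is licit and propagates $w\ge\Delta$, which prevents the separation from ever dropping below $\Delta$; the quantitative lower barrier of Lemma~\ref{lem:compwulff}, bounding the rate at which $d$ and $d^c$ can decrease, excludes an instantaneous onset of violation and thus closes the bootstrap. Finally, for unbounded sets one localizes the integrals with a spatial cutoff and absorbs the resulting flux terms, the equi-Lipschitz bounds on $d(\cdot,t)$ and $d^c(\cdot,t)$ providing the needed control.
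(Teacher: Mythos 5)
Your reduction to the scalar inequality $w:=d+d^c\ge\Delta$ and your monotonicity computation are both correct, and in the region strictly between the two sets your quantity $\Delta-w$ coincides with the paper's $\tilde d_F-\tilde d_E$, so the algebraic heart of the argument (monotonicity of $\pa\p$ applied to $z_1\in\pa\p(\nabla d)$ and $-z_2\in\pa\p(-\nabla d^c)$) is the same as in the paper. The genuine gap is in the localization, i.e.\ exactly at the obstacle you flag and then dismiss by ``continuation''. The difficulty is that the Gronwall estimate is vacuous precisely on the time interval where it is licit, and illicit precisely when it would carry information: as long as $\dist(E(t),F^c(t))\ge\Delta$ one has $w\ge\Delta$ everywhere and $\int\Psi(\Delta-w)\equiv 0$, so the inequality says $0\le 0$; the instant the separation drops below $\Delta$, the infimum of $w$ is attained (up to $\e$) at points of $E(t)$ and $F^c(t)$ themselves and along the whole connecting $\po$-segment, so $\{w<\Delta\}$ reaches all the way to $\pa E(t)$ and $\pa F^c(t)$, the test function $\Psi'(\Delta-w)$ is no longer compactly supported in $\{d>0\}\cap\{d^c>0\}$, and the integration by parts produces flux terms of $z_1+z_2$ on $\pa E\cup\pa F^c$ with no sign. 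Moreover, Lemma~\ref{lem:compwulff} does not ``exclude an instantaneous onset of violation'': it only yields $\dist(E(t+s),F^c(t+s))\ge\Delta-2\chi_N\sqrt{s}$, which is perfectly compatible with the separation being strictly below $\Delta$ for every $s>0$ after the critical time. So the bootstrap never restarts and the proof does not close.

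What is needed (and what the paper does) is a localization that keeps the comparison away from both boundaries \emph{before} knowing the conclusion. The paper fixes an annulus $S=\{\eta_1<d_E(\cdot,0)<\eta_2\}$ strictly between $E^0$ and $(F^0)^c$, uses Lemma~\ref{lem:compwulff} only to guarantee that for a quantified time $t^*$ (depending on $\Delta$ alone) $S$ remains strictly between $E(t)$ and $F^c(t)$, and then replaces $d_E$ and $d_F+\Delta$ by the truncations $d_E\lor(\eta_1'+\chi_N\sqrt{t})$ and $(d_F+\Delta)\land(\eta_2'-\chi_N\sqrt{t})$, whose time-dependent levels are calibrated so that the ordering holds on $\pa S\times(0,t^*)$ and the comparison function $\Psi(\tilde d_F-\tilde d_E)$ vanishes there. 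This manufactures the missing boundary condition, the maximum principle then gives $d_E\ge d_F+\Delta$ in an inner annulus $S''$ where the truncations are inactive, and a short segment argument (splitting a near-optimal segment from $\pa E(t)$ to $\pa F(t)$ at a point of $S''$) converts this back into $\dist(E(t),F^c(t))\ge\Delta$ on $(0,t^*)$; iterating in steps of length $t^*$ gives all times. Your proof would be repaired by incorporating this truncation-in-an-annulus device (or an equivalent barrier keeping $\textup{Supp}\,\Psi'(\Delta-w)$ at positive distance from $E\cup F^c$); as written, the continuation step is not a proof.
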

\begin{proof}
Let $T^*_E$ and $T^*_F$ be the maximal existence time for $E$ and $F$. For all $t> \min\{T^*_E, T^*_F\}$ we have that either $E$ or $F^c$ is empty. In this case, clearly  the conclusion holds true. 

Now, consider the case $t\le \min\{T^*_E, T^*_F\}$ (and assume without lost of generality that $T^*_E, T^*_F >0$).  Let us fix $0<\eta_1<\eta_1'<\eta_1''<\eta_2''<\eta_2'<\eta_2<\Delta$.  We will show the conclusion of the theorem for a time interval $(0, t^*)$ for a suitable $t^*$ depending only on $\eta_1$, $\eta_1'$, $\eta_1''$, $\eta_2''$, $\eta_2'$, $\eta_2$, and ultimately only on $\Delta$. It is clear then that reiterating the argument yields the conclusion of the theorem for all times. We recall that $d_E(x,t):=\dist (x, E(t))-\dist(x, E^c(t))$ and
 $d_F$ is defined  analogously. We denote by $z_E$ and $z_F$ the  fields appearing in the definition of super- and subsolutions (see Definition \ref{Defsol}), corresponding to $E$ and $F$, respectively.  Define 
$$
S:=\{x\in \R^N:\, \eta_1<d_E(x,0)<\eta_2\}
$$
and note that by Lemma~\ref{lem:compwulff} there exists  $t^*>0$ depending only on $\eta_1$, $\Delta-\eta_2$ such that 
\begin{equation}\label{lemma2}
\begin{array}{l}
d_E(x,t)\geq d_E(x,0)-\chi_N\sqrt{t} \vspace{5pt} \\
d_F(x,t)\leq d_F(x,0)+\chi_N\sqrt{t} 
\end{array}
\qquad\text{for all $x\in \overline S$ and $t\in (0, t^*)$.}
\end{equation}
We now set 
\begin{align*}
& \tilde d_E:=d_E\lor (\eta_1'+\chi_N\sqrt{t})\,,\\
&  \tilde d_F:=(d_F+\Delta)\land (\eta_2'-\chi_N\sqrt{t})\,.
\end{align*}
Clearly, by our assumptions $\tilde d_E(\cdot, 0)\geq \tilde d_F(\cdot, 0)$. We claim that 
\beq\label{parabolicbd}
\tilde d_E\geq \tilde d_F\text{ on }\pa S\times  (0, t^*)\,.
\eeq
Here and in the rest of the proof we may assume without loss of generality that $t^*$ is as small as needed (but still depending only on $\Delta$). 
To this aim, write $\pa S=\Gamma_1\cup\Gamma_2$, where $\Gamma_1:=\{d_E(\cdot, 0)=\eta_1\}$ and $\Gamma_2:=\{d_E(\cdot, 0)=\eta_2\}$.
Since $d_F(\cdot, 0)+\Delta\leq d_E(\cdot, 0)=\eta_1$ on $\Gamma_1$,  we deduce 
$$
\tilde d_F\leq d_F+\Delta\leq \eta_1+\chi_N\sqrt{t}\leq \eta_1'\leq \tilde d_E
$$
on $\Gamma_1\times (0, t^*)$.  Similarly one can show that the inequality  $\tilde d_E\geq \tilde d_F$ holds on $\Gamma_2\times (0, t^*)$. 

Again by \eqref{lemma2} we have
\beq\label{straponzina}
d_E\geq \frac{\eta_1''}{2}>0 \quad\text{in }\{d_E(\cdot, 0)\geq \eta_1''\}\times (0, t^*)
\eeq 
and, observing that $d_F(\cdot, 0)\leq \eta_2''-\Delta$ in  $\{d_E(\cdot, 0)\leq \eta_2''\}$,
\beq\label{straponzina1}
d_F\leq \frac{\eta_2''-\Delta}{2}<0 \quad\text{in }\{d_E(\cdot, 0)\leq \eta_2''\}\times (0, t^*)\,.
\eeq 
In particular
$$
E(t)\subset\subset F(t) \qquad\text{for }t\in (0, t^*)\,.
$$
We now claim that, setting
$$
S'':=\{x\in \R^N:\, \eta''_1<d_E(x,0)<\eta''_2\},
$$ 
we have
\beq\label{claim2}
\tilde d_E=d_E \quad\text{and}\quad \tilde d_F=d_F+\Delta\qquad\text{in }S''\times (0, t^*)\,.
\eeq
Indeed by \eqref{lemma2} we have
$$
d_E(x, t)\geq \eta_1''-\chi_N\sqrt{t}\geq \eta_1+\chi_N\sqrt{t}\quad\text{for }(x,t)\in S''\times (0, t^*)
$$
 and thus $\tilde d_E=d_E$ in $S''\times (0, t^*)$. The proof of the second identity in \eqref{claim2} is analogous.
  
Now we will use quite standard parabolic maximum principles, like in the proof of Lemma \ref{lem:compwulff}.  Notice  that
 $$
 \partial_t\tilde d_E=  \sum_{t\in J}[\tilde d_E(\cdot,t+0)-\tilde d_E(\cdot,t-0)]dx + \partial^d_t \tilde d_E\,,
 $$
 where $J$ is the (countable) set of times where $d_E$ possibly
jumps and $\partial^d_t \tilde d_E$ is the diffuse 
part of the distributional derivative. Using for instance the chain rule proved in \cite{AmbDM}, in $S\times (0, t^*)$ we have that
$$
\partial^d_t \tilde d_E=
\begin{cases}
\frac{\chi_N}{2\sqrt{t}} & \text{a.e. in }\{(x,t):\eta_1'+\chi_N\sqrt{t}>d_E(x)\}\,,\\
\partial^d_t  d_E & \text{$|\partial^d_t  d_E|$-a.e. in }\{(x,t):\eta_1'+\chi_N\sqrt{t}\leq d_E(x)\}\,.
\end{cases}
$$
An analogous formula holds for $\partial^d_t \tilde d_F$. Recalling that 
$(\Div z_E)^+$ and  $(\Div z_F)^- $ belong to  $L^{\infty}(S\times(0,t^*))$  
it follows  that (possibly modifying $t^*$)
\beq\label{zetae0}
\partial^d_t \tilde d_E \geq \Div z_E \quad\text{and}\quad \partial^d_t \tilde d_F \leq \Div z_F
\eeq
in the sense of measures in $S\times (0, t^*)$. Note also that a.e. in $S\times (0, t^*)$
\beq\label{zetae1}
z_E\in \pa \p(\nabla \tilde d_E) \quad\text{and}\quad z_F\in \pa \p(\nabla \tilde d_F)\,.
\eeq
Fix $p>N$ and set $\Psi(s):=(s^+)^p$  and 
$w:=\Psi(\tilde d_F-\tilde d_E)$.  By \eqref{parabolicbd} we have 
\beq\label{asbefore0}
w=0 \qquad\text{on }\pa S\times (0, t^*)\,.
\eeq
Using as before the chain rule for $BV$ functions, recalling \eqref{zetae0} and the fact that the jump parts of $\pa_t\tilde d_E$ and $\pa_t\tilde d_F$ are nonnegative and
nonpositive, respectively,  we have
\beq\label{asbefore}
\partial_t w\leq 
 \Psi'(\tilde d_F-\tilde d_E)(\partial^d_t\tilde d_F-\partial^d_t \tilde d_E)
\le \Psi'(\tilde d_F-\tilde d_E)(\Div z_F - \Div z_E)
\eeq
in $S\times (0, t^*)$. Choose a cut-off function $\eta\in C^{\infty}_c(\R^N)$ such that $0\leq \eta\leq 1$ and $\eta\equiv 1$ on $B_1$. For every $\e>0$ we set $\eta_\e(x):=\eta(\e x)$.
Using \eqref{asbefore0} and \eqref{asbefore}, we have
\begin{align*}
\partial_t \int_{S} w \eta_\e^p dx
&\le  \int_{S}\eta_\e^p \Psi'(\tilde d_F-\tilde d_E)(\Div z_F - \Div z_E)\\
&= -\int_{S} \eta_\e^p\Psi''(\tilde d_F-\tilde d_E)(z_F-z_E)\cdot(\nabla \tilde d_F-\nabla \tilde d_E)\, dx+\\
&\hphantom{\leq}\,\,\, p\int_S\eta_\e^{p-1}\,\Psi'(\tilde d_F-\tilde d_E)\nabla\eta_\e\cdot(z_F-z_E)\, dx\\
 &\le p\int_S\eta_\e^{p-1}\,\Psi'(\tilde d_F-\tilde d_E)\nabla\eta_\e\cdot(z_F-z_E)\, dx,
\end{align*}
where we have also used the inequality $(z_F-z_E)\cdot(\nabla \tilde d_F-\nabla \tilde d_E)\geq 0$, which follows from \eqref{zetae1} and the convexity of $\p$. By H\"older Inequality and using the explicit expression of $\Psi$ and $\Psi'$, we get
$$
\partial_t \int_{S} w \, \eta_\e^p dx\leq Cp^2 \|\nabla \eta_\e\|_{L^p(\R^N)}\left(\int_{S} w\,  \eta_\e^p dx\right)^{1-\frac1p}\,,
$$ 
for some constant $C>0$ depending only on the $L^\infty$-norms of $z_E$ and $z_F$. Since $w=0$ at $t=0$, a simple ODE argument then yields
$$
\int_{S} w \, \eta_\e^p dx\leq \left(Cp\|\nabla \eta_\e\|_{L^p(\R^N)} t\right)^p
$$ 
for all $t\in (0, t^*)$.
Observing that $\|\nabla \eta_\e\|_{L^p(\R^N)}^p=\e^{p-N}\|\nabla \eta\|_{L^p(\R^N)}^p\to 0$ and $\eta_\e\nearrow 1$  as $\e\to 0^+$, we conclude that $w=0$,
and in turn  $\tilde d_E\geq \tilde d_F$   in $S\times (0,t^*)$. 
In particular, by claim \eqref{claim2}, we have shown that 
$d_E\geq d_F+\Delta$   in $S''\times (0,t^*)$. We finally claim that   $\dist (E(t), F^c(t))\geq \Delta$ for $t\in (0, t^*)$. 
To see this, fix $\e\ge 0$, and let
let $x\in \pa E(t)$ and $y\in \pa F(t)$ be such that $\po(x-y)\le \dist (E(t), F^c(t))+ \e$. Note that by \eqref{straponzina} and \eqref{straponzina1}
we have $d_E(x,0)<\eta_1''$ and $d_E(y, 0)>\eta_2''$. Thus there exists $z\in S''\cap [x,y]$, where $[x,y]$ denotes the segment joining $x$ and $y$.  
Since $d_E(\cdot, t)\geq d_F(\cdot, t)+\Delta$   in $S''$, we have
\begin{multline}
\dist (E(t), F^c(t))\ge \po(x-y) - \e = 
\po(x-z) + \po(z-y) - \e \ge
\\
 - d_F(z,t) + d_E(z,t) -\e \ge \Delta - \e.
\end{multline}
The claim follows by the arbitrariness of $\e$,  
and this concludes the proof of the theorem. 
\end{proof}

\section{Existence via minimizing movements}\label{sec:ATW}
In this section we prove an existence result for the crystalline curvature flow, according to Definition \ref{Defsol}. Such a solution is obtained via a variant of the
Almgren-Taylor-Wang minimizing movements scheme (\cite{ATW}) introduced in  \cite{Chambolle, CaCha}.

\subsection{Minimizing movements}

Let $E^0\subset \R^N$ be closed. Fix a time-step $h>0$ and set
$E^0_h=E^0$. We then inductively define $E_h^{k+1}$ (for all $k\in \N$) according to the following  procedure: 
If $E_h^{k}\neq \emptyset$, $\R^N$, then let   $(u_h^{k+1},z_h^{k+1}) :\R^N\to\R\times \R^N$ satisfy
\begin{equation}\label{eq:iterk}
\left\{
\begin{array}{lll}
 -h \, \Div z_h^{k+1} + u_h^{k+1} = \dd_{E_h^k},  \\
 z_h^{k+1} \in \pa\p(\nabla u_h^{k+1}) \quad\text{a.e. in $\R^N$},\\ 
\end{array}
\right.
\end{equation}
and  set  $E_h^{k+1}:=\{x:u_h^{k+1}\le 0\}$. If either $E_h^{k}=\emptyset$ or $E_h^{k}=\R^N$, then set $E_h^{k+1}:=E_h^{k}$. We denote by $T^*_h$ the first discrete time $hk$ such that $E_h^k=\emptyset$, if such a time exists; otherwise we set  $T^*_h=+\infty$.

In proposition \ref{prop:ATW} below we will show that this construction  is well defined, since  problem \eqref{eq:iterk} admits a unique solution $u_h^{k+1}$ that is Lipschitz continuous. In particular, $E_h^{k+1}$ is a closed set for all $k$. 

Before stating  the main facts about the differential problem \eqref{eq:iterk}, we recall
that given $z\in L^{\infty}(\R^N; \R^N)$ with $\Div z\in L^2_{loc}(\R^N)$ and  $w\in BV_{loc}(\R^N)\cap L^2_{loc}(\R^N)$,
 $z\cdot Dw$ denotes the Radon measure associated with the linear functional 
$$
L\varphi:=-\int_{\R^N}w\,\varphi \Div z\, dx- \int_{\R^N}w\,z\cdot \nabla \varphi \, dx \qquad \text{ for all } \varphi\in C^{\infty}_c(\R^N),
$$
see~\cite{Anz:83}.


\begin{proposition}\label{prop:ATW}
Let $g\in L^2_{loc}(\R^N)$.
There exists a field $z\in L^\infty(\R^N;W(0,1))$ and a  
unique function
$u \in BV_{loc}(\R^N)\cap L^2_{loc}(\R^N)$
such that the pair $(u,z)$ satisfies 
\begin{equation}\label{eq:iterk2}
\left\{
\begin{array}{ll}
 -h \, \Div z  + u = g \qquad &\text{ in } \mathcal D'(\R^N),  \\
 \po(z)\le 1 \quad & \text{ a.e. in } \R^N,\\
z\cdot Du = \p(Du) \qquad &\text{ in  the sense of measures}.
\end{array}
\right.
\end{equation}
Moreover, for any $R>0$ and $v\in BV(B_R)$ with $\spt (u-v)\Subset B_R$,
\[
\p(Du)(B_R) +\frac{1}{2h} \int_{B_R}{(u-g)^2}\, dx \le
\p(Dv)(B_R) +\frac{1}{2h} \int_{B_R}{(v-g)^2}\, dx,
\]
and for every $s\in \R$ the set $E_s:=\{x\in \R^N: \, u(x)\le s\}$ solves the minimization problem
\[
\min_{F\Delta E_s\Subset B_R}  P_\p(F;B_R) + \frac{1}{h}  \int_{F\cap B_R} (g(x) -s) \, dx.
\]

If $g_1\leq g_2$ and if $u_1$, $u_2$ are the corresponding solutions to \eqref{eq:iterk2} (with $g$ replaced by $g_1$ and $g_2$, respectively), then $u_1\leq u_2$. 

Finally if in addition $g$ is Lipschitz with $\p(\nabla g)\le 1$,
then  the unique solution $u$ of \eqref{eq:iterk2}
is also Lipschitz 
and satisfies  $\p(\nabla u)\le 1$ a.e. in $\R^N$. As a consequence, \eqref{eq:iterk2} is equivalent to 
\begin{equation}\label{eq:iterk3}
\left\{
\begin{array}{lll}
 -h \, \Div z  + u = g, \\
 z  \in \pa\p(\nabla u) \quad\text{a.e. in $\R^N$}\\ 
\end{array}
\right.
\end{equation}
\end{proposition}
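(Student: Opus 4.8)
The plan is to first establish all the assertions for data $g\in L^2(\R^N)$ by convex-analytic methods, and then to bootstrap to general $g\in L^2_{loc}$ using the comparison principle. For $g\in L^2(\R^N)$ I would obtain $u$ as the unique minimizer of the strictly convex functional $\mathcal F(u):=h\,\p(Du)(\R^N)+\tfrac12\int_{\R^N}(u-g)^2\,dx$ over $BV(\R^N)\cap L^2(\R^N)$. Existence follows from the direct method: since $\mathcal F(0)=\tfrac12\|g\|_2^2<\infty$, any minimizing sequence has equibounded $L^2$-norm (from the fidelity term) and equibounded $\p(Du_n)$, so $BV$-compactness and lower semicontinuity of both terms produce a minimizer, while uniqueness comes from the strict convexity of the quadratic term. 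The field $z$ and the three conditions in \eqref{eq:iterk2} then arise from the characterization of the subdifferential of the anisotropic total variation: the Euler–Lagrange inclusion $(g-u)/h\in\partial(\p(D\,\cdot\,))(u)$ is equivalent, via the Anzellotti pairing theory of \cite{Anz:83} (see also \cite{BeNoPa}), to the existence of $z\in L^\infty(\R^N;\R^N)$ with $\po(z)\le1$, $-\Div z=(g-u)/h$ in $\mathcal D'(\R^N)$, and $z\cdot Du=\p(Du)$ as measures.

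The local minimality inequality is then a short consequence of the optimality conditions. For $v$ with $\spt(u-v)\Subset B_R$, I would integrate $-h\Div z+u=g$ against $u-v$; since $u-v$ is compactly supported in $B_R$, integration by parts and the pairing identity give $\int_{B_R}(u-g)(u-v)=-h\,\p(Du)(B_R)+h\int_{B_R}z\cdot Dv$, and the bound $z\cdot Dv\le\po(z)\,\p(Dv)\le\p(Dv)$ together with the convexity estimate $\tfrac12(v-g)^2-\tfrac12(u-g)^2\ge(u-g)(v-u)$ yields the claimed inequality after dividing by $h$. The identification of the sublevel sets $E_s=\{u\le s\}$ with the geometric minimizers is obtained from the coarea formula $\p(Du)(B_R)=\int_{\R}P_\p(\{u>s\};B_R)\,ds$ combined with the layer-cake representation of the fidelity term, which decouples $\mathcal F$ into the one-parameter family of functionals $P_\p(F;B_R)+\tfrac1h\int_{F\cap B_R}(g-s)\,dx$; the monotonicity in $s$ of their minimizers reconstructs $u$ through $u(x)=\inf\{s:x\in E_s\}$.

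For the comparison statement I would subtract the two equations, obtaining $-h\Div(z_1-z_2)+(u_1-u_2)=g_1-g_2\le0$, and test against $(u_1-u_2)^+$. This gives $h\int(z_1-z_2)\cdot D(u_1-u_2)^+ +\int\big((u_1-u_2)^+\big)^2\le0$, and the monotonicity of the subdifferential (localized to the superlevel set $\{u_1>u_2\}$) forces the pairing term to be nonnegative, whence $(u_1-u_2)^+\equiv0$, i.e. $u_1\le u_2$. Using this comparison, the extension to $g\in L^2_{loc}$ proceeds by solving the problem for $L^2$ data that coincide with $g$ on larger and larger balls and passing to the locally stabilizing limit, the field and pairing identity being recovered in the limit. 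Finally, when $g$ is Lipschitz with $\p(\nabla g)\le1$ one has $g(x+\tau)\le g(x)+\po(\tau)$ for every $\tau$; since the problem is invariant under adding a constant to $g$ and under spatial translation, comparison gives $u(x+\tau)\le u(x)+\po(\tau)$ for all $\tau$, which is precisely $\p(\nabla u)\le1$ a.e. and provides a Lipschitz representative of $u$. With $\p(\nabla u)\le1$ the inclusion $z\in\partial\p(\nabla u)$ is meaningful pointwise and is equivalent to $\po(z)\le1$, $z\cdot\nabla u=\p(\nabla u)$, giving the equivalence of \eqref{eq:iterk2} and \eqref{eq:iterk3}.

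The main obstacle I expect is concentrated in the rigorous use of the Anzellotti pairing: establishing the \emph{identity} $z\cdot Du=\p(Du)$ at the level of measures (not merely the inequality), and justifying the sign of $\int(z_1-z_2)\cdot D(u_1-u_2)^+$ in the comparison step, where one must restrict the pairing to $\{u_1>u_2\}$ and control the resulting boundary contribution. The second delicate point is the passage to $L^2_{loc}$ data: ensuring that the solution stabilizes locally as the balls grow and that the vector field together with the pairing identity survives the limit, so that the constructed $u$ genuinely satisfies \eqref{eq:iterk2} in the unbounded setting.
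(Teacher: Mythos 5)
The paper gives no self-contained argument here: the proof is a citation to \cite[Theorem~2]{CaCha} and \cite[Theorem~3.3]{AlChNo} (a sketch along exactly your lines survives in a commented-out block of the source: minimization on balls $B_n$, uniform local $L^\infty$ bounds by comparison with the explicit solution for $\po+g(0)$, $BV$ compactness and a diagonal argument, then uniqueness and Lipschitz continuity via the comparison principle and translations). Your overall architecture --- direct method, Anzellotti pairing for the Euler--Lagrange inclusion, the $(u-v)$ test for local minimality, coarea for the level sets, translation invariance for the Lipschitz bound, and $\p(\nabla u)\le 1$ forcing the equivalence of \eqref{eq:iterk2} and \eqref{eq:iterk3} --- is the standard and correct route, and coincides with what the cited references do.

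The genuine gap is concentrated in the passage from $L^2(\R^N)$ to $L^2_{loc}(\R^N)$, which is where all the actual content of the proposition lives (the datum of interest, $g=\dd_{E_h^k}$, is never in $L^2(\R^N)$). First, your comparison argument tests the difference of the two equations against $(u_1-u_2)^+$ over all of $\R^N$; this is only legitimate when $(u_1-u_2)^+\in L^2(\R^N)$ and the integration by parts produces no boundary contribution at infinity. For $L^2_{loc}$ data one must run a localized version with cutoffs $\eta_\e$ and show that the error terms $\int \eta_\e^{p-1}\nabla\eta_\e\cdot(z_1-z_2)\,\Psi'(\dots)$ vanish as $\e\to0$ --- exactly the $\eta_\e^p$ mechanism used in the proof of Theorem~\ref{th:compar}, and the heart of \cite[Theorem~2]{CaCha}. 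Since uniqueness, the order-preservation $g_1\le g_2\Rightarrow u_1\le u_2$, and the Lipschitz bound (via $g_\tau\le g$) all rest on this comparison in the $L^2_{loc}$ setting, the gap propagates through most of the statement. Second, your construction for $L^2_{loc}$ data ``passes to the locally stabilizing limit'' of solutions with truncated data, but no mechanism for stabilization is given: the truncations are not ordered, so comparison yields nothing, and the global contraction estimate $\|u_1-u_2\|_{L^2}\le\|g_1-g_2\|_{L^2}$ does not localize for free. What actually makes this work is either (i) uniform local a priori bounds obtained from explicit barriers (for $\p$-Lipschitz $g$, comparison with $\po(\cdot-x_0)+g(x_0)$ and the explicit resolvent $\po_h$ of \eqref{eq:explicitpoh}), followed by compactness and uniqueness of the limit equation --- which circles back to the localized comparison principle --- or (ii) a quantitative local dependence estimate. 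You correctly flag both issues as ``delicate points,'' but they are not refinements of an otherwise complete proof; they are the missing proof of the proposition in the generality in which it is stated.
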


\begin{proof} See \cite[Theorem~2]{CaCha}, \cite[Theorem~3.3]{AlChNo}. 
\end{proof}
\begin{remark}[Consistency with the ATW scheme] When $\partial E^0$ is bounded, the minimality property of the level sets stated above shows, in particular, that
the sets $E_h^{k}$ are constructed  according to the Almgren-Taylor-Wang scheme~\cite{ATW}.
\end{remark}
\noop{
For every $n\in \N$, let $u_n$ be the unique solution to
$$
\min_{w\in BV(B_n)} \p(D w)(B_n) + \frac{1}{2h} \int_{B_n} |w  - g|^2 \, dx.
$$
We claim that for all $R>0$ there exists $C_R>0$ such that
\begin{equation}\label{claim}
\|u_n\|_{L^\infty(B_n)} \le C_R \qquad \text{ for all } n\ge R.
\end{equation}
This follows by a comparison argument. To this purpose, let $w_n$ be the solution to 
$$
\min_{w\in BV(B_n)} \p(D w)(B_n) + \frac{1}{2h} \int_{B_n} |w  - \po - g(0)|^2 \, dx.
$$
Recalling that $\p(\nabla g)\le 1$, we have $\po + g(0) \ge g$. By  comparison (see for instance the proof of \cite[Lemma 2.1]{Cha}), 
we have  $w_n\ge u_n$. From the explicit expression of $w_n$ (see equation (38) in \cite[CaCha]), the claim follows. 
Comparing now the energy of $u_n$ with that of 
$$
u_n'(x):=
\begin{cases}
0 & \text { for } x\in B_n,\\
u_n(x) & \text { otherwise},
\end{cases}
$$
by the minimality  of $u_n$ and by \eqref{claim},  we obtain
$$
\p(D u_n)(B_R) \le \int_{\partial B_R} \p(\nu)|u_n| +\frac{1}{2h} \int_{B_R} g^2 \, dx \le C'_R 
$$ 
for all $n\ge R$, for some $C'_R\ge 0$ independent of $n$.

Therefore, by the  $BV$ compactness theorem and standard diagonal arguments, $u_n$ converge (up to a subsequence) to some  $u$,  weakly in $BV_{loc}$.   

Let $R>0$ be such that 
\beq\label{Erre}
u_n\to u \quad\text{in $L^1(\pa B_R)$}
\eeq
(in the sense of traces) and fix $v\in BV(B_R)$ such that $v=u$ on $\pa B_R$. 
 By the minimality of $u_n$ we have
 \begin{multline*}
 \p(D u_n)(B_R) + \frac{1}{2h} \int_{B_R} |u_n  - g|^2 \, dx\\ \leq \p(D v)(B_R) +\int_{\pa B_R}\p(\nu)|u-u_n|\, d\H^{N-1}+ \frac{1}{2h} \int_{B_R} |v  - g|^2 \, dx\,.
 \end{multline*}
  Using \eqref{Erre} and the lower semicontinuity of the anisotropic total variation we get
  $$
 \p(D u)(B_R) + \frac{1}{2h} \int_{B_R} |u  - g|^2 \, dx \leq \p(D v)(B_R) + \frac{1}{2h} \int_{B_R} |v  - g|^2 \, dx\,.
$$
 Since \eqref{Erre} holds for almost every $R>0$, we have shown that 
 $$
 \p(D u)(B_R) + \frac{1}{2h} \int_{B_R} |u  - g|^2\leq \p(D (u+\varphi))(B_R) + \frac{1}{2h} \int_{B_R} |u+\varphi  - g|^2\, dx
 $$ 
 for all $\varphi\in C^\infty_c(\R^N)$. 
 In turn, by a first variation argument we conclude that $u$ satisfies \eqref{eq:iterk2}.
 The uniqueness now follows by applying the comparison principle established in \cite[Theorem~2]{CaCha}.
 The Lipschitz continuity of $u$ can be inferred by the same comparison principle. Indeed for every $\tau\in \R^N$ set 
 $u_\tau:=u(\cdot+\tau)-\po(\tau)$, $z_\tau:=z(\cdot+\tau)$, $g_\tau:=g(\cdot+\tau)-\po(\tau)$ and note that $(u_\tau, z_\tau)$ satisfies
 \eqref{eq:iterk2}, with $u$, $z$, and $g$ replaced by $u_\tau$, $z_\tau$, and $g_\tau$, respectively. Since $g_\tau\leq g$ by the Lipschitz continuity assumption on $g$, we deduce by the aforementioned comparison principle that $u_\tau\leq u$; i.e., $u(x+\tau)-u(x)\leq\po(\tau)$ for all $\tau\in \R^N$ and for a.e. $x\in \R^N$. The opposite inequality can be proved analogously. This yields the Lipschitz continuity of $u$
  and the fact that $\p(\nabla u)\leq 1$ almost everywhere.
  
  In order to prove the last part of the statement, fix $R>0$ let $M:=\|u\|_{L^\infty(B_R)}+\|g\|_{L^\infty(B_R)}$ and observe that by the coarea formula 
  }

Since by the previous proposition  $\p(\nabla u_h^{k+1})\le 1$~a.e.~in $\R^N$,
one deduces, in particular, that
\begin{equation}\begin{array}{ll}\label{eq:ineqd}
u_h^{k+1} \le d_{E_h^{k+1}} & \textup{ in } \{x:\dist(x, E_h^{k+1})>0\}\,,\\
u_h^{k+1} \ge d_{E_h^{k+1}} & \textup{ in } \{x:\dist(x, E_h^{k+1})<0\}\,.
\end{array}
\end{equation}

We are now in  a position to define the time discrete evolutions. Precisely, we set  
\begin{equation}\label{discretevol}
\begin{array}{l}
E_h:=\{(x,t): x\in E_h^{[t/h]}\},\vspace{2pt}\\
 E_h(t):=E_h^{[t/h]}=\{x: (x,t)\in E_h\},\vspace{2pt}\\
d_h(x,t):=\dd_{E_h(t)}(x),\vspace{2pt}\\
 u_h(x,t):=u_h^{[t/h]}(x),\vspace{2pt} \\
z_h(x,t):=z_h^{[t/h]}(x),
\end{array}
\end{equation}
where $[\cdot]$ stands for the integer part of its argument.

\begin{remark}[Discrete comparison principle]\label{rm:dcp}
The last part of  Proposition~\ref{prop:ATW} clearly implies that the  scheme is monotone, that is, the discrete evolutions satisfy the comparison principle. More precisely, if $E^0\subseteq F^0$ are closed sets and if we denote by $E_h$ and $F_h$ the  discrete evolutions with initial datum $E^0$ and $F^0$, respectively, then $E_h\subseteq F_h$. 
\end{remark}

\subsection{Comparison with the Wulff shape} In this subsection, we exploit Remark~\ref{rm:dcp} to compare the discrete evolutions  \eqref{discretevol} with the minimizing movements  of the Wulff shape and derive an estimate, which will be useful in the convergence analysis. 
The evolution starting from a Wulff shape $W(0,R)$ is explicitly
known. Indeed, from~\cite[Appendix B,  Eq.~(39)]{CaCha},
the solution of \eqref{eq:iterk2}, with $g$ replaced by $\dd_{W(0,R)}=\po-R$,
is given by $\po_h-R$, where 
\begin{equation}\label{eq:explicitpoh}
\po_h(x): =\begin{cases}
\sqrt{h}\frac{2N}{\sqrt{N+1}} & \textup{ if } \po(x)\le \sqrt{h(N+1)},\\
\po(x)+h\frac{N-1}{\po(x)} & \textup{ else.}
\end{cases}
\end{equation}
It follows that if $E^0=W(0,R)$, one has $E_h(t)=W(0,r_h^R(t))$ for
a function
$r_h^R$ that satisfies
$$
r_h^R(h)=\frac{R+\sqrt{R^2-4h(N-1)}}{2} 
$$
if $h\leq R^2/(4(N+1))$.
In particular,
$$
r_h^R(h)\geq \sqrt{R^2-4h(N-1)}
$$
for the same  $h$'s. 
By iteration,  we have 
$r_h^R(t)\geq \sqrt{R^2-4t(N-1)}\geq \frac{R}{\sqrt 2}$
for $0\leq t\leq R^2/(8(N-1))$ and $h\leq R^2/(8(N+1))$. Since $r_h^R(t)=R$ for $t\in [0, h)$, we infer 
\begin{equation}\label{errehstima}
r_h^R(t)\geq \sqrt{R^2-4t(N-1)}
\end{equation}
for $0\leq t\leq R^2/(8(N+1))$ and for all $h$.

Now we return to  the motion from an arbitrary set $E^0$.
If for some $(x,t)\in\R^N\times [0,T_h^*)$ we have $d_h(x,t)>R$, then $W(x,R)\cap E_h(t)=\emptyset$.
%
Hence, by the comparison principle stated in Remark~\ref{rm:dcp} and by  \eqref{errehstima} 
we have
$$
d_h(x,s)\geq \sqrt{R^2-4(N-1)(s-t+h)}
$$
for $t<s$ and $s+h-t <R^2/(8(N+1))$. 


By letting $R\nearrow d_h(x,t)$ we obtain
\beq\label{straponzina21}
d_h(x, s)\geq \sqrt{d_h^2(x,t)-4(N-1)(s-t+h)}
\eeq
for $t<s$ and $s+h-t <d_h^2(x,t)/(8(N+1))$. 



\subsection{Convergence of the scheme} Up to a subsequence we have

\[
E_{h_l}\stackrel{\mathcal K}{\longrightarrow} E\qquad\text{and}\qquad 
{(\mathring{E}_{h_l})}^c\stackrel{\mathcal K}{\longrightarrow} A^c
\]
for a suitable closed sets $E$ and a suitable open set $A\subset  E$. 
Define $E(t)$ and $A(t)$ as in \eqref{discretevol}. 

Observe that if $E(t)=\emptyset$ for some $t\ge 0$,
then~\eqref{straponzina21} implies that $E(s)=\emptyset$ for all $s\ge t$
so that we can define, as in Definition~\ref{Defsol}, the extinction
time $T^*$ of $E$, and similarly the extinction  time ${T'}^*$ of
 $A^c$. Notice that at least one between  $T^*$ and ${T'}^*$ is $+\infty$. 
Possibly extracting a further subsequence, we have the following result:

\begin{proposition}\label{prop:E}
There exists a countable set $\mathcal N\subset (0, +\infty)$ such that
${d_{h_l}}(\cdot, t)^+\to \dist(\cdot, E(t))$   and  $d_{h_l}(\cdot, t)^-\to \dist(\cdot,A^c)$ locally uniformly  for all $t\in (0, +\infty) \setminus \mathcal{N}$.

Moreover, for every $x\in \R^N$ the functions $\dist(x,E(\cdot))$ and  $\dist(x,A^c)$ are left continuous
and are right lower semicontinuous.
Equivalently,  the functions $ E(\cdot)$ and $ A^c$  are left continuous
and are right upper semicontinuous with respect to the Kuratowski convergence.  
Finally, $E(0)= E^0$ and  $A(0)=\mathring{E^0}$.
\end{proposition}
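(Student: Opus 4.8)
\emph{Strategy and reduction.} The plan is to analyze the positive part $d_{h_l}(\cdot,t)^+=\dist(\cdot,E_{h_l}(t))$ and the negative part $d_{h_l}(\cdot,t)^-=\dist(\cdot,(\mathring E_{h_l})^c(t))$ separately and symmetrically; I describe the positive one, the negative being identical after replacing \eqref{straponzina21} by its analogue obtained through an \emph{inner} Wulff-shape comparison (if $W(x,R)\subseteq E_h(t)$ then, by Remark~\ref{rm:dcp} and the explicit shrinking of Wulff shapes, $W(x,r_h^R(s-t))\subseteq E_h(s)$ for $s>t$). Writing $C:=4(N-1)$, the only analytic input is \eqref{straponzina21}, which I read as the \emph{one-sided} bound: for $a<b$,
\[
\dist(x,E_h(a))^2\le \dist(x,E_h(b))^2+C(b-a+h),
\]
equivalently that $t\mapsto \dist(x,E_h(t))^2+Ct$ is non-decreasing up to the error $Ch$. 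I stress that only this one direction (the distance of a point cannot decrease faster than the Wulff rate) is available: the distance may well jump \emph{up}, which is precisely why an exceptional set of times must be tolerated.

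\emph{Selection.} Fix a countable dense set $D\subset\R^N$. For $x\in D$ the maps $t\mapsto\dist(x,E_{h_l}(t))^2+Ct$ are almost non-decreasing and locally bounded, so Helly's theorem and a diagonal extraction give a further subsequence (not relabelled) along which they converge, for every $x\in D$ and every $t$, to a non-decreasing limit; let $\mathcal N\subset(0,\infty)$ be the countable union of the jump sets of these limits, together with the analogous set for the negative part. Since the $\dist(\cdot,E_{h_l}(t))$ are $1$-Lipschitz for $\po$, for each $t\notin\mathcal N$ this upgrades to local uniform convergence $d_{h_l}(\cdot,t)^+\to v^+(\cdot,t)$; as a local uniform limit of $\po$-distance functions is the $\po$-distance to its zero set, $v^+(\cdot,t)=\dist(\cdot,F(t))$ with $F(t)$ the slice Kuratowski limit of $E_{h_l}(t)$. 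Passing the one-sided bound to the limit shows $g(x,s):=v^+(x,s)^2+Cs$ is non-decreasing on $\mathcal N^c$; hence the left limits $g(x,t^-)$ exist for all $t$, and correspondingly the left Kuratowski limit $F(t^-):=\lim_{s\nearrow t,\,s\notin\mathcal N}F(s)$ is well defined.

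\emph{Identification $E(t)=F(t^-)$.} This is the heart of the matter. That $F(t^-)\subseteq E(t)$ is immediate: points of $F(s)\subseteq E(s)$ with $s\nearrow t$ have limits in $E(t)$ because $E$ is closed. For the reverse, take $x_0\in E(t)$, i.e.\ $(x_0,t)\in E$; by condition (ii) of Kuratowski convergence there are $(y_l,\tau_l)\in E_{h_l}$ with $(y_l,\tau_l)\to(x_0,t)$. For fixed $s<t$ with $s\notin\mathcal N$ one has $s<\tau_l$ for $l$ large, so the one-sided bound in the \emph{good} direction gives $\dist(x_0,E_{h_l}(s))^2\le\dist(x_0,E_{h_l}(\tau_l))^2+C(\tau_l-s+h_l)\le\po(x_0-y_l)^2+C(\tau_l-s+h_l)$; letting $l\to\infty$ yields $\dist(x_0,F(s))=v^+(x_0,s)\le\sqrt{C(t-s)}$. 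Sending $s\nearrow t$ and using $F(s)\to F(t^-)$ gives $\dist(x_0,F(t^-))=0$, i.e.\ $x_0\in F(t^-)$. Thus $E(t)=F(t^-)$ for every $t$; in particular at $t\notin\mathcal N$, where $F$ is continuous, $F(t^-)=F(t)$, so $v^+(\cdot,t)=\dist(\cdot,E(t))$ and the asserted local uniform convergence holds (and, symmetrically, $d_{h_l}(\cdot,t)^-\to\dist(\cdot,A^c(t))$).

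\emph{One-sided continuity, initial data, and the main obstacle.} From $E(t)=F(t^-)$ we get $\dist(x,E(t))^2+Ct=g(x,t^-)$, the left limit of a non-decreasing function, which is itself non-decreasing and left-continuous; hence $\dist(x,E(\cdot))$ is left-continuous and right lower semicontinuous, i.e.\ $E(\cdot)$ is left-continuous and right upper semicontinuous for the Kuratowski convergence, and likewise for $A^c$. Finally, $E_{h_l}(0)=E^0$ and $(\mathring E_{h_l})^c(0)=(\mathring{E^0})^c$ give $E^0\subseteq E(0)$ and $\mathring{E^0}\subseteq A(0)$ at once; for the reverse, if $x\notin E^0$ then by \eqref{straponzina21} $\dist(x,E_{h_l}(\tau_l))\ge\sqrt{\dist(x,E^0)^2-C(\tau_l+h_l)}\to\dist(x,E^0)>0$, so $x$ cannot be a space-time limit of points of $E_{h_l}$ at time $0$, whence $E(0)=E^0$ and symmetrically $A(0)=\mathring{E^0}$. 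The crux throughout is the identification $E(t)=F(t^-)$: since the hypotheses only furnish space-time Kuratowski convergence and \eqref{straponzina21} controls the distance in one time-direction only, slices cannot be compared two-sidedly, so one must approximate points of $E(t)$ strictly from the left—where the bound acts favourably—and then absorb the loss through the left-continuity of $F$ away from the (necessarily countable) jump set $\mathcal N$.
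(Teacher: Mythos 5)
Your proof is correct and takes essentially the same route as the paper's: the one-sided Wulff comparison \eqref{straponzina21} is turned into almost-monotonicity in time of $\dist(x,E_{h_l}(t))^2+Ct$, the exceptional set $\mathcal{N}$ is its countable jump set, and the identification of the slicewise Kuratowski limit with $E(t)$ rests on the same barrier estimate (the paper argues contrapositively, excluding any $x$ with $d(x,t)^+>0$ from $E(t)$ by a Wulff barrier on $[t-\e,t+\e]$, while you approximate $(x,t)\in E$ by points of $E_{h_l}$ and compare at earlier times). The only step whose justification needs tightening is the appeal to Helly: ``almost non-decreasing up to an error $Ch_l$'' together with local boundedness does not by itself yield a uniform $BV$ bound, but it does here because these functions are piecewise constant with at most $T/h_l$ steps on $[0,T]$ and per-step negative increment $O(h_l)$; one must also accommodate the value $+\infty$ past the discrete extinction times (the paper deals with this by first showing $d(\cdot,t)<+\infty$ for $t<T^*$).
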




\begin{proof}
By the Ascoli-Arzel\`a Theorem and a standard diagonal argument, 
we may extract a further (not relabeled) subsequence such that 
 $d_{h_l}(\cdot, t)\to  d(\cdot, t)$  locally uniformly for all $t\in \Q\cap (0,+\infty)$, where $d(\cdot, t)$ is either a Lipschitz function or infinite everywhere. In the latter case, either $d(\cdot, t)\equiv +\infty$ or    $d(\cdot, t)\equiv -\infty$. 

We  observe that for all $t\in (0, T^*)\cap \Q$ we have $d(\cdot, t)<+\infty$. To see this we argue by contradiction assuming that for every $x\in \R^N$ and for every $M>0$ we have
$d_{h_l}(x, t)>M$ for all $l$ large enough. We may now apply \eqref{straponzina21} to deduce that there exists a right interval $(t, t')$ independent of $l$ such that $d_{h_l}(x, s)>\frac{M}2$ for $l$ large enough and for all $s\in (t,t')$; that is, $d_{h_l}(\cdot, s)\to+\infty$ for all 
$s\in (t, t')$. This in turn would imply $E(s)=\emptyset$ for all $s\in (t, t')$, which is impossible since $t<T^*$.
A similar argument shows that for all $t\in (0,{T'}^*)$ we have $d(\cdot, t)>-\infty$.

Let $x\in \R^N$ and $t>0$ be such that $\limsup_l d_{h_l}(x,t)=:R>0$. Then, given $0<R'<R$, we have $d_{h_l}(x,t)\ge R'$ for infinitely many $l$. By \eqref{straponzina21}, for $t<s< t+C {R'}^2$ we deduce $\limsup_l d_{h_l}(x,s)>\sqrt{{R'}^2 - 4(N-1) (s-t)}$. In particular if $s\in \Q\cap (t, t+CR^2)$, then  it follows $d(x,s)\ge \sqrt{R^2-4(N-1)(s-t)}$.  If, in addition, also $t\in\Q$, then we have 
$d(x,s)\ge \sqrt{d(x,t)^2-4(N-1)(s-t)}$.
Now let $x\in\R^N$ and $t\geq 0$, and assume $R:=\limsup_{s\in\Q, s\searrow t} d(x,t) >0$. Consider a sequence of rational numbers  $s_k\searrow t$ such that $\lim_k d(x,s_k) = R$. For $s>t$ rational and  close enough to $t$, if $k$ is sufficiently large, then $s_k<s$ and
$d(x,s) \ge \sqrt{d(x,s_k)^2-4(N-1)(s-s_k)}$. Sending $k$ to infinity it follows $d(x,s)\ge \sqrt{R^2-4(N-1)(s-t)}$ so that 
$\liminf_{s\in\Q, s\searrow t} d(x,t) \ge R$. Hence $d(x,\cdot)$ has a right limit at $t$. The same conclusion holds if $\liminf_{s\in\Q, s\searrow t} d(x,t) <0$, with the same proof. 
We deduce that the $d$ admits a right limit (locally uniformly in space) at any $t\ge 0$. A similar argument shows that
$d$ also admits a left limit at any $t > 0$. Moreover, arguing similarly and 
using~\eqref{straponzina21} again, we can show  that
\beq\label{straponzina22}
\begin{array}{rcccl}
d(x,t+0)^\pm & \!\ge\! &
\displaystyle \limsup_{l\to\infty, s\to t} d_{h_l}(x,s)^\pm 
\\[2mm]
& \!\ge\! 
& \displaystyle \liminf_{l\to\infty,s\to t} d_{h_l}(x,s)^\pm
 & \!\ge\! & d(x,t-0)^\pm.
\end{array}
\eeq

Let $\mathcal N$ be the set of all times $t$ such that the left and
right limits of $d$ differ at $(x,t)$, for some $x\in\R^N$
(we also assume $0\in\mathcal{N}$).
Notice that $\mathcal N$ is countable, since it can be written as the union over $k\in \N$ and $x\in \Q^N$ of the times such that the gap between the right and left limit of $d(x,\cdot)$ is larger than $1/k$  (which for fixed $k$ and $x$  cannot have cluster points). We denote by $d(x, t)$ the common value of the right and left limits of $d(x, \cdot)$ at $t\not\in \mathcal N$.  

By \eqref{straponzina22} we immediately have that $\lim_{l\to\infty}d_{h_l}(\cdot , t)= d(\cdot,t)$ for all $t\not\in \mathcal{N}$.  
We now show that for $t\not\in \mathcal{N}$,
we have  $d(\cdot,t)^+=\dist(\cdot ,E(t))$.
This is equivalent to showing that $E(t)$ coincides with the
Kuratowski limit $K$ of $E_{h_l}(t)$, since $d(\cdot,t)^+=\dist(\cdot,K)$.
Clearly, $K\subseteq E(t)$.
Conversely, if $x\not\in K$, 
then $d(x,t)^+=:R>0$. Since $d$ is continuous at $t$, we may find $\e$ so small that  
$\lim_{l\to\infty}d_{h_l}(x,t-\e)\geq d(x,t-\e)>R/2$ and in turn,   by  \eqref{straponzina21},  $W(x,R/4)\times [t-\e,t+\e]\cap E_{h_l}=\emptyset$ for $l$
large enough. Thus  $x\not\in E(t)$, showing that 
  $E(t)=K$ and $d(x,t)^+=\dist(x ,E(t))$. A similar argument yields  that $d(x,t)^-=\dist(x ,A^c)$.
 
 Always by \eqref{straponzina21}, one can easily prove  that $E(0)\subseteq E^0$. Since $E_{h_l}(0)=E^0$ for all $l$, we infer the equality $E(0)= E^0$. Symmetrically, one can show that $A(0)=\mathring{E^0}$. 
 
Finally, we prove the continuity properties of $E(t)$. The right upper semicontinuity with respect to the Kuratowski convergence is a consequence of the fact that $E$ is closed. Let us prove now the left continuity. To this aim, denote by $\hat K$  the Kuratowski limit of $E(s)$ as $s\nearrow t$. Clearly $\hat K\subseteq E(t)$. 
 Let now $x\not\in \hat K$.  Then $\lim_{s\nearrow t}\dist(x, E(s))=\dist(x, \hat K)=:R>0$. Arguing exactly as before we may choose $\e$ so small  that 
 $\liminf_{l}\dist(x, E_{h_l}(t-\e))\geq \dist(x, E(t-\e))>R/2$ and  $W(x,R/4)\times [t-\e,t+\e]\cap E_{h_l}=\emptyset$ for all $l$
large enough, so that $x\not\in E(t)$. Hence $\hat K=E(t)$. This establishes the Kuratowski left-continuity of $E(\cdot)$ and concludes the proof of the proposition.
\end{proof}

\begin{theorem}
$E$ is a supersolution in the sense of Definition~\ref{Defsol}
with initial datum $E^0$,
while $A$ is a subsolution with initial datum ${E}^0$.
\end{theorem}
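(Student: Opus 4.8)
The plan is to verify, one at a time, the four conditions (a)--(d) of Definition~\ref{Defsol} for $E$, the subsolution property of $A$ then following by the symmetric argument applied to the complementary evolution (recall $A(0)=\mathring{E^0}$, so $A^c(0)=(\mathring{E}^0)^c$ is the correct initial datum). Conditions (a)--(c) are essentially contained in Proposition~\ref{prop:E}: it gives $E(0)=E^0$, hence (a); the Kuratowski left-continuity $E(s)\to E(t)$ as $s\nearrow t$, hence (c); while (b) follows from the Wulff-comparison estimate \eqref{straponzina21}, since $E(t)=\emptyset$ forces $d_{h_l}(\cdot,s)\to+\infty$ for $s$ slightly larger than $t$ and thus $E(s)=\emptyset$. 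The whole difficulty is therefore concentrated in condition (d), i.e.\ in producing the field $z$ and establishing the distributional inequality \eqref{eq:supersol}.

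First I would record the discrete Euler--Lagrange inequality underlying the scheme. Combining the first line of \eqref{eq:iterk}, which reads $h\,\Div z_h^{k+1}=u_h^{k+1}-\dd_{E_h^k}$, with the pointwise bound \eqref{eq:ineqd}, namely $u_h^{k+1}\le \dd_{E_h^{k+1}}$ on $\{\dist(\cdot,E_h^{k+1})>0\}$, one obtains
\[
\frac{d_h(\cdot,(k+1)h)-d_h(\cdot,kh)}{h}\ \ge\ \Div z_h(\cdot,(k+1)h)\qquad\text{on }\{d_h(\cdot,(k+1)h)>0\},
\]
which is the discrete form of \eqref{eq:supersol}. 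Since $\po(z_h)\le1$ a.e., the fields $z_{h_l}$ are bounded in $L^\infty(\R^N\times(0,T^*);\R^N)$, so up to a further subsequence $z_{h_l}\rightharpoonup z$ weakly-$*$, and the convexity and closedness of $\{\po\le 1\}$ preserve the constraint $\po(z)\le1$ a.e. At this point I would invoke the standard energy (dissipation) estimate of the ATW scheme coming from the minimality in Proposition~\ref{prop:ATW}: testing the incremental problem with $v=\dd_{E_h^k}$ and telescoping the anisotropic perimeters yields a uniform $L^2_{loc}$ bound on $\Div z_h$, equivalently $h\,\Div z_h\to0$ in $L^2_{loc}$. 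Consequently $u_{h_l}\to d$ in $L^2_{loc}(\{d>0\})$, $\nabla u_{h_l}\rightharpoonup\nabla d$ (since $\p(\nabla u_h)\le1$ keeps the gradients equibounded), and $\Div z_{h_l}\rightharpoonup\Div z$ weakly in $L^2_{loc}$.

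Next I would pass to the limit in the discrete inequality. For $\psi\in C^\infty_c(\R^N\times(0,T^*)\setminus E)$ with $\psi\ge0$, the support is a compact subset of the open set $\{d>0\}$; the uniform convergence $d_{h_l}(\cdot,t)^+\to\dist(\cdot,E(t))$ of Proposition~\ref{prop:E} together with \eqref{straponzina21} provides a uniform space-time lower bound $d_{h_l}\ge c>0$ on $\spt\psi$ for $l$ large, so the discrete inequality may legitimately be tested against $\psi$. After a discrete summation by parts the left-hand side converges to $-\iint d\,\partial_t\psi$ (using $d_{h_l}\to d$ in $L^1_{loc}$), while the right-hand side converges by $\Div z_{h_l}\rightharpoonup\Div z$; this gives \eqref{eq:supersol} distributionally in $\R^N\times(0,T^*)\setminus E$. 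The one-sided bound $(\Div z)^+\in L^\infty(\{d\ge\delta\})$ I would obtain from the same discrete inequality, bounding the forward difference of $d_h$ from above on $\{d_h\ge\delta\}$ by comparison (Remark~\ref{rm:dcp}) with the explicit shrinking-Wulff evolution \eqref{errehstima}, exploiting the inner tangent Wulff shape of radius $\delta$ available at every point of $\{d_h\ge\delta\}$; this is precisely the step where the choice $m=\p$ is essential, and the bound passes to the weak limit.

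The remaining, and to my mind genuinely delicate, point is the admissibility $z\in\partial\p(\nabla d)$ a.e. By \eqref{subp} and the eikonal identity $\p(\nabla d)=1$ a.e.\ on $\{d>0\}$, and since $\po(z)\le1$ forces $z\cdot\nabla d\le\p(\nabla d)$ pointwise, it suffices to prove the reverse inequality $z\cdot\nabla d\ge\p(\nabla d)$ a.e. Here I would use the discrete relation $z_{h_l}\cdot\nabla u_{h_l}=\p(\nabla u_{h_l})$ together with the convergences established above: lower semicontinuity of the convex functional $w\mapsto\iint\psi\,\p(\nabla w)$ gives $\iint\psi\,\p(\nabla d)\le\liminf_l\iint\psi\,z_{h_l}\cdot\nabla u_{h_l}$, while an integration by parts turns $\iint\psi\,z_{h_l}\cdot\nabla u_{h_l}$ into $-\iint u_{h_l}(\nabla\psi\cdot z_{h_l}+\psi\,\Div z_{h_l})$, which passes to the limit because $u_{h_l}\to d$ strongly in $L^2_{loc}$ against the weak limits of $z_{h_l}$ and $\Div z_{h_l}$; this yields $\iint\psi\,\p(\nabla d)\le\iint\psi\,z\cdot\nabla d$. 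As the integrand $\p(\nabla d)-z\cdot\nabla d$ is nonnegative, equality holds a.e., i.e.\ $z\in\partial\p(\nabla d)$. The main obstacle throughout is obtaining the right compactness for the divergence term: the passage to the limit in $\iint u_h\psi\,\Div z_h$ is what forces the (mostly technical) bounds in Definition~\ref{Defsol}(d), and it is made possible only by the $L^2_{loc}$ energy estimate; combined with the crystalline-specific $L^\infty$ bound on $(\Div z)^+$, this is where essentially all the work lies.
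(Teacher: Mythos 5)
Your overall architecture matches the paper's: conditions (a)--(c) from Proposition~\ref{prop:E}, the discrete inequality $\Div z_h^{k+1}\le(\dd_{E_h^{k+1}}-\dd_{E_h^k})/h$ from \eqref{eq:iterk} and \eqref{eq:ineqd}, weak-$*$ compactness of $z_{h_l}$ in $L^\infty$, the one-sided bound on $\Div z$ by comparison with explicit Wulff-type solutions, and the identification $z\in\partial\p(\nabla d)$ via lower semicontinuity of $w\mapsto\iint\eta\,\p(\nabla w)$ plus an integration by parts. However, there is a genuine gap at the single most delicate step: you justify the passage to the limit in $\iint u_{h_l}\,\eta\,\Div z_{h_l}$ by claiming a uniform $L^2_{loc}$ bound on $\Div z_{h_l}$ obtained by ``testing the incremental problem with $v=\dd_{E_h^k}$ and telescoping the anisotropic perimeters.'' This estimate is not available here. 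The sets are allowed to be unbounded, so the perimeters being telescoped are generically infinite; more fundamentally, the scheme of \eqref{eq:iterk} re-initializes the datum to $\dd_{E_h^{k+1}}$ (rather than feeding $u_h^{k+1}$ back in), which destroys the telescoping structure of the usual minimizing-movements dissipation estimate. Indeed, the only lower bound the paper is able to prove is $\Div z_h^{k+1}\ge -\tfrac{2N}{\sqrt{h(N+1)}}$, which degenerates as $h\to0$; there is no uniform two-sided $L^2_{loc}$ control, and consequently no weak $L^2_{loc}$ convergence of $\Div z_{h_l}$ to lean on.

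The paper closes this gap with a trick you are missing: it never uses any compactness for $\Div z_{h_l}$ beyond the one-sided $L^\infty$ bound $\Div z_{h_l}\le (N-1)/\delta$ on $\{d_{h_l}\ge\delta\}$. Setting $m_l(t):=\min_{\spt\eta(\cdot,t)}(u_{h_l}-d)$ and $M_l(t):=\max_{\spt\eta(\cdot,t)}(u_{h_l}-d)$, one writes $\nabla(u_{h_l}-d)=\nabla(u_{h_l}-d-m_l)$, integrates by parts, and exploits the fact that $u_{h_l}-d-m_l\ge0$ on $\spt\eta$ together with the upper bound on $\Div z_{h_l}$ to get $\liminf_l\iint z_{h_l}\cdot\nabla(u_{h_l}-d)\,\eta\ge0$; the symmetric computation with $M_l$ gives the matching $\limsup\le0$. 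The only convergence needed is the (locally uniform, hence $m_l,M_l\to0$) convergence $u_{h_l}\to d$ on $\{d\ge\delta\}$, which follows from the two-sided pointwise bounds $-\tfrac{2N}{\sqrt{N+1}}\sqrt{h}\le u_h^{k+1}-\dd_{E_h^k}=h\,\Div z_h^{k+1}\le h\tfrac{N-1}{R}$ --- no energy estimate required. Unless you can actually substantiate an $h$-uniform $L^2_{loc}$ bound on $\Div z_h$ for this re-distanced scheme on unbounded sets (which seems doubtful), your argument for \eqref{eq:good}, and hence for $z\in\partial\p(\nabla d)$, does not go through as written; replacing it with the $m_l/M_l$ device repairs the proof. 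Your remaining steps (the distributional inequality \eqref{eq:supersol}, the bound on $(\Div z)^+$, and the reduction of admissibility to $z\cdot\nabla d\ge\p(\nabla d)$) are essentially the paper's.
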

\begin{proof}
Points \textit{(a), (b)} and \textit{(c)} of Definition~\ref{Defsol} follow from Proposition~\ref{prop:E}.
It remains to show \textit{(d)}.
Possibly extracting a further subsequence and setting $z_{h_l}(\cdot, t):=0$ for $t>T^*_{h_l}$ if $T^*_{h_l}<T^*$, we may assume
that $z_{h_l}$ converges  weakly-$*$ in $L^\infty(\R^N\times (0,T^*))$  to some vector-field $z$
satisfying $\po(z)\le 1$ almost everywhere.
Recall that by~\eqref{eq:ineqd} we have $u_h^{k+1} \le \dd_{E_h^{k+1}}$,
whenever $\dd_{E_h^{k+1}}\ge 0$. In turn,  it follows from~\eqref{eq:iterk} that 
\begin{equation}\label{eq:ineqdisc}
 \Div z_h^{k+1} \le \frac{\dd_{E_h^{k+1}} - \dd_{E_h^k} }{h} \qquad \text{ a.e. on } \{\dd_{E_h^{k+1}}\ge 0\}.
\end{equation}
Consider
a nonnegative test function
$\eta\in C_c^\infty((\R^N\times (0,T^*))\setminus E)$.
If $l$ is large enough, then the distance of the support of $\eta$
from $E_{h_l}$ is bounded away from zero. In particular,  $d_{h_l}$ is finite
and positive on $\mathrm{Supp\,}\eta$. We deduce from~\eqref{eq:ineqdisc} that
\begin{multline*}
\int\int \eta(x,t)\left(\frac{d_{h_l}(x,t+{h_l})-d_{h_l}(x,t)}{h_l}-\Div z_{h_l}(x,t+{h_l})\right)
dt dx 
\\ 
=-\int   \int \left(\frac{\eta(x,t)-\eta(x,t-h_l)}{h_l}d_{h_l}(x,t)- z_{h_l}(x,t+h_l)\cdot\nabla \eta(x,t)\right)
\,dt dx
\ge 0.
\end{multline*}
Passing to the limit $l\to\infty$ we obtain \eqref{eq:supersol}.

Next, we establish an upper bound for $\Div z_{h_l}$ away from $E_{h_l}$. To this aim
 observe that 
\[
\dd_{E^k_h}  = \min_{y\in E_h^k} \po(\cdot - y)
\]
so that, by  \eqref{eq:iterk} and the comparison principle stated at the end of Proposition~\ref{prop:ATW}, 
\[
u_h^{k+1} \le \min_{y\in E_h^k} \po_h(\cdot - y)
\]
where $\po_h$ is given in~\eqref{eq:explicitpoh}. Thus,  if $\dd_{E^k_h}(x)\geq R>0$,
then
\[
u_h^{k+1}(x)\le \min_{y\in E_h^k} \po(x - y)+h\frac{N-1}{R}
= \dd_{E^k_h}(x) +h\frac{N-1}{R},
\]
provided $h\le R^2/(N+1)$.
As a consequence of~\eqref{eq:iterk}, we obtain
\begin{equation}\label{numero}
\Div z_h^{k+1}\le  \frac{N-1}{R} \qquad\text{a.e. in }\{x:\, \dd_{E^k_h}(x)\geq R\}.
\end{equation}
It is then easy to deduce from the convergence properties
of $E_{h_l}$ and $d_{h_l}$ that 
\[
\Div z \le \frac{N-1}{R}\qquad\text{in }\{(x,t)\in\R^N\times (0, T^*):\, d(x,t)>R\}
\]
in the sense of distributions. It follows that $\Div z$
is a Radon measure in $\R^N\times (0,T^*)\setminus E$, and 
$(\Div z)^+\in L^\infty(\{(x,t)\in\R^N\times (0,T^*):\, d(x,t)\geq\delta\})$ for every $\delta>0$.

We now provide  a lower ($h$-dependent) bound for $\Div z_{h_l}$.  To this aim, note that if $\dd_{E^k_h}(x)=:R>0$, then 
$\dd_{E^k_h}\geq R-\po(\cdot-x)$. Thus, by comparison as before, 
$$
u_h^{k+1}(x)\geq R-\po_h(0)=R-\sqrt{h}\frac{2N}{\sqrt{N+1}}\,.
$$
In turn, by \eqref{eq:iterk}, we deduce
$$
\Div z_{h}^{k+1}\geq -\frac{1}{\sqrt h} \frac{2N}{\sqrt{N+1}} \qquad\text{a.e. in }\{x:\, \dd_{E^k_h}(x)>0\}.
$$
Combining the above inequality with \eqref{numero} and  using \eqref{eq:iterk} again,  we deduce that for all $t\in (0, T^*)\setminus\mathcal{N}$ (where recall that $\mathcal{N}$ is introduced in Proposition \ref{prop:E}) and any $\delta>0$
$$
\|u_{h_l}(\cdot, t)-d_{h_l}(\cdot, t-h_l)\|_{L^{\infty}(\{x:d_{h_l}(x,t-h_l)\geq\delta\})}\leq \sqrt{h}\frac{2N}{\sqrt{N+1}},
$$
provided that $l$ is large enough.  In particular, recalling  the convergence properties
of $E_{h_l}$ and $d_{h_l}$ (see also \eqref{straponzina22}), we deduce that 
\begin{equation}\label{elleuno}
u_{h_l}\to d\qquad\text{a.e. in }\R^N\times (0,T^*)\setminus E,
\end{equation}
with the sequence $\{u_{h_l}\}$ locally (in space and time) uniformly bounded. 

Consider now, as before,  a nonnegative test function
$\eta\in C_c^\infty((\R^N\times (0,T^*))\setminus E)$. 
Then, recalling \eqref{elleuno}, we have by lower semicontinuity
\[
\int\int \p(\nabla d)\eta\, dxdt\le \liminf_{l}\int\int \p(\nabla u_{h_l})\eta\, dxdt
= \liminf_l \int\int (z_{h_l}\cdot\nabla u_{h_l})\eta\, dxdt.
\]
On the other hand,
\[
\int\int (z_{h_l}\cdot\nabla u_{h_l}) \eta\, dxdt = 
\int\int (z_{h_l}\cdot\nabla d) \eta\, dxdt + 
\int\int z_{h_l}\cdot\nabla (u_{h_l}-d) \eta\, dxdt,
\]
with
\[
\int\int (z_{h_l}\cdot\nabla d) \eta\, dxdt
 \stackrel{l\to\infty}{\longrightarrow}
\int\int (z\cdot\nabla d) \eta\, dxdt.
\]
Hence, we obtain
\begin{equation}\label{eq:verygood}
\int\int \p(\nabla d)\eta\, dxdt\le \int\int (z\cdot\nabla d) \eta\, dxdt,
\end{equation}
provided we show that 
\begin{equation}\label{eq:good}
\lim_l \int\int z_{h_l}\cdot\nabla (u_{h_l}-d) \eta\, dxdt = 0.
\end{equation}
For each $t$, set
\[m_l(t):=\min_{x\in\spt \eta(\cdot,t)} \bigl(u_{h_l}(x,t)-d(x,t)\bigr),\quad
M_l(t):=\max_{x\in \spt \eta(\cdot,t)} \bigl(u_{h_l}(x,t)-d(x,t)\bigr).\]
Recall that these quantities are uniformly
bounded and converge to $0$ at all $t\not\in \mathcal{N}$.
Then, we can write
\begin{multline}\label{yeswecan}
\int\int z_{h_l}\cdot\nabla (u_{h_l}-d) \eta\, dxdt 
=
\int\int z_{h_l}\cdot\nabla (u_{h_l}-d-m_l) \eta\, dxdt 
\\ = -\int\int (u_{h_l}-d-m_l) (z_{h_l}\cdot \nabla \eta + \eta\Div z_{h_l})\, dxdt.
\end{multline}
For $l$ large enough, since the support of $\eta$ is at
positive distance from $E$ there exists $\delta>0$ such that
$d_{h_l}\geq \delta$ everywhere on this support, so that $\Div z_{h_l}\le(N-1)/\delta$.
It follows that
\[
 -\int\int (u_{h_l}-d-m_l)  \eta\Div z_{h_l}\, dxdt
\ge 
-\frac{N-1}{\delta}\int\int (u_{h_l}-d-m_l) \eta\, dxdt\stackrel{l\to\infty}{\longrightarrow}0,
\]
thanks also to \eqref{elleuno}.
Recalling \eqref{yeswecan}, we can conclude that
\[
\liminf_l 
\int\int z_{h_l}\cdot\nabla (u_{h_l}-d) \eta\, dxdt 
\ge 0.
\]
In the same way, writing now
\[
\int\int z_{h_l}\cdot\nabla (u_{h_l}-d) \eta\, dxdt 
=
\int\int z_{h_l}\cdot\nabla (u_{h_l}-d -M_l) \eta\, dxdt 
\]
and using $u_{h_l}-d-M_l\le 0$ a.e.~on $\spt\eta$, one can show that
\[
\limsup_l 
\int\int z_{h_l}\cdot\nabla (u_{h_l}-d) \eta\, dxdt 
\le 0
\]
so that \eqref{eq:good} follows.
In turn, \eqref{eq:verygood} holds, that is, $\p(\nabla d)\leq z\cdot \nabla d$ a.e. in $\R^N\times (0,T^*)\setminus E$.
On the other hand, recalling that $\po(z)\leq 1$ a.e. in $\R^N\times (0, T^*)$, we have
$$
z\cdot\nabla d 
\leq \p(\nabla d)
$$
a.e. in $\R^N\times (0, T^*)$. We conclude that  $\p(\nabla d)= z\cdot \nabla d$ and, in turn, $z\in \partial\p(\nabla d)$ a.e. in $\R^N\times (0,T^*)\setminus E$. This concludes the proof that $E$ is a supersolution.
 The proof that $A$ is a subsolution is identical.
\end{proof}
\begin{corollary}
Let $u^0$ be a bounded, uniformly continuous in $\R^N$. Then for
all $s\in \R$ but a countable number, the minimizing movement scheme
starting from  $E^0_s=\{u^0 \le s\}$ converges to the unique solution
of the curvature flow
in the sense of Definition~\ref{Defsol}, with initial datum $E^0_s$.
\end{corollary}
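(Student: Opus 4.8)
The plan is to run the minimizing movements scheme \emph{simultaneously} for all the sublevel sets $E^0_s:=\{u^0\le s\}$, to use the comparison principle of Theorem~\ref{th:compar} to strictly separate the flows issuing from different levels, and to deduce that fattening can occur only for countably many $s$. First I would fix a countable dense set $Q\subseteq\R$. Since $u^0$ is bounded, $E^0_s=\emptyset$ for $s<\inf u^0$ and $E^0_s=\R^N$ for $s\ge\sup u^0$, so only the levels in the range of $u^0$ are relevant. Since $u^0$ is uniformly continuous, for $s<s'$ there is $\delta(s'-s)>0$ with $\dist(E^0_s,(E^0_{s'})^c)=\dist(\{u^0\le s\},\{u^0> s'\})\ge\delta(s'-s)>0$; this positive separation is the decisive consequence of the hypotheses. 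Applying the convergence theorem above together with a diagonal extraction over $Q$, I would select a single subsequence $h_l\to0$ such that, for every $s\in Q$, the discrete evolutions $E_{h_l,s}$ (with initial datum $E^0_s$) converge in the Kuratowski sense to a supersolution $E_s$ with datum $E^0_s$, while $(\mathring E_{h_l,s})^c$ converges to $A_s^c$ with $A_s$ an open subsolution with the same datum and $A_s\subseteq E_s$.

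Next I would exploit monotonicity and upgrade it to a strict ordering. The discrete comparison principle (Remark~\ref{rm:dcp}) gives $E_{h_l,s}\subseteq E_{h_l,s'}$ for $s<s'$, whence $E_s\subseteq E_{s'}$ and $A_s\subseteq A_{s'}$, so both families are monotone in $s$. Applying Theorem~\ref{th:compar} to the supersolution $E_s$ and the subsolution $A_{s'}$, and using $\dist(E^0_s,(E^0_{s'})^c)\ge\delta(s'-s)>0$, I obtain $\dist(E_s(t),A_{s'}(t)^c)\ge\delta(s'-s)>0$ for all $t\ge0$, that is, $E_s(t)\subseteq A_{s'}(t)$ for every $t$ and all $s<s'$ in $Q$.

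From this strict ordering the space–time gaps $\Phi_s:=E_s\setminus A_s$ are pairwise disjoint: for $s<s'$ one has $\Phi_s\subseteq E_s\subseteq A_{s'}$, which is disjoint from $\Phi_{s'}=E_{s'}\setminus A_{s'}$. Since pairwise disjoint subsets of $\R^N\times(0,+\infty)$ can have positive Lebesgue measure on $B_R\times(0,T)$ for at most countably many indices, the set of levels $s$ carrying a gap of positive measure is countable; by interlacing ($\overline{A_s}\subseteq E_s\subseteq\overline{A_{s'}}$ for $s<s'$) the increasing families $s\mapsto\overline{A_s}$ and $s\mapsto E_s$ are Kuratowski-continuous outside a further countable set of levels. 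Let $\mathcal B\subset\R$ denote the resulting countable bad set.

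Finally, for $s\notin\mathcal B$ I would show convergence of the full sequence to the unique solution. The essential point is that Theorem~\ref{th:compar} compares \emph{any} supersolution with \emph{any} subsolution, irrespective of the approximating sequence. Thus if $E'$ and $A'$ are subsequential limits of $E_{h,s}$ and of the open sets associated with $(\mathring E_{h,s})^c$, then $E'$ is a supersolution and $A'$ a subsolution with datum $E^0_s$, and comparing them with $E_{s''},A_{s'}$ for $s''<s<s'$ in $Q$ yields, after letting $s''\nearrow s$ and $s'\searrow s$,
\[
\bigcup_{s''<s}E_{s''}\subseteq A'\subseteq E'\subseteq\bigcap_{s'>s}A_{s'}.
\]
At a continuity level $s\notin\mathcal B$ the two extreme sets coincide, forcing $E'=E_s$, $A'=A_s$ and $\mathring E_s=A_s$; hence the whole sequence converges, $E_s$ is simultaneously a supersolution and (via $A_s=\mathring E_s$) a subsolution, i.e.\ a solution in the sense of Definition~\ref{Defsol}. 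The same squeezing applied to an arbitrary solution $\hat E$ with datum $E^0_s$ gives $E_{s''}\subseteq\hat E\subseteq A_{s'}$ and therefore $\hat E=E_s$, which is the asserted uniqueness. I expect the main obstacle to be exactly this last step: converting the measure-theoretic disjointness of the gaps and the monotonicity in $s$ into the topological identity $\mathring E_s=A_s$, and upgrading subsequential to full convergence — both of which rely crucially on the strict ordering and on the fact that comparison holds across flows produced by different subsequences.
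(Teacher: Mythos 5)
Your proposal is correct and follows essentially the same route as the paper, which only sketches the argument in three sentences: the comparison theorem of Section 3 separates the flows from distinct sublevel sets of the uniformly continuous $u^0$, the pairwise-disjoint gaps $E_s\setminus A_s$ identify the countable set of fattening levels, and the squeezing between neighbouring levels gives uniqueness and full (rather than subsequential) convergence at the remaining levels. The delicate point you flag at the end (passing from $|E_s\setminus A_s|=0$ to the topological identification $\mathring E_s=A_s$) is precisely the part the paper leaves to the cited ``standard arguments,'' so your expansion is faithful to, and more explicit than, the published proof.
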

\begin{proof} The arguments are standard and
rely on the comparison theorem~\ref{th:compar}. The bad (countable set)
is the set of levels for which ``fattening'' occurs, that is,
$|E\setminus A|>0$. Observe that from Theorem~\ref{th:compar},
one easily shows the existence of a unique level-set solution $u(x,t)$
starting from $u^0$, which shares the same spatial modulus of continuity
and is also uniformly continuous in time (see for instance \cite[Subsection~6.3]{CMP3}).
\end{proof}

\section{Conclusion and perspectives}
In this note we have shown 
the existence and uniqueness of a mean curvature flow
(namely, the ``natural'' flow by mean curvature along the Cahn-Hoffmann
vector field) with a technique which does not require any type of
regularity on the surface tension, and thus have provided the first
sound definition of a crystalline curvature flow in any dimension.
It does not require that the initial surface is bounded and applies,
in particular, also to the case of graphs.
The uniqueness result is based on a very standard parabolic comparison
principle. The general approach,
based on the fact that the level sets of the distance
functions have nonincreasing curvatures as the distance increases
(as was exploited as early as in~\cite{Soner93} in the viscosity setting),
can quite probably be used in more general situations, and even
maybe for motions which are not necessarily variational.
However, it should need substantial adaption. For instance,
if replacing
the mobility $m=\po$ in our approach by other (convex) functions
is in principle easy (it is enough to consider, for the distance
functions, the $m$-distance function instead of the $\po$-distance),
in the nonsmooth case it yields difficulties which still require further
investigation. Indeed, if $m$ is smooth and $\p$ is not, then it will not 
be true anymore that the level sets of the distance function 
have globally bounded curvature as the distance increases, so that
Definition~\ref{Defsol} needs to be changed. It is not
yet clear what assumption
on $(\Div z)^\pm$ is then useful in order to be able to derive both existence and
uniqueness. This is a subject for future study.

\bibliography{ccf}

\end{document}